\definecolor{darkblue}{rgb}{0,0,0.8}
\newtheorem{theorem}{Theorem}[section]
\newtheorem{lemma}[theorem]{Lemma}
\newtheorem{conjecture}[theorem]{Conjecture}
\theoremstyle{definition}
 \newtheorem{open}[theorem]{Open Problem}
\DeclareMathOperator{\PG}{{\mathrm{PG}}}
\DeclareMathOperator{\PSU}{{\mathrm{PSU}}}
\DeclareMathOperator{\PGU}{{\mathrm{PGU}}}
\DeclareMathOperator{\PSp}{{\mathrm{PSp}}}
\DeclareMathOperator{\POmega}{{\mathrm{P}\Omega}}
\DeclareMathOperator{\PGammaU}{{\mathrm{P\Gamma U }}}
\DeclareMathOperator{\PGammaSp}{{\mathrm{P\Gamma Sp}}}
\DeclareMathOperator{\PGammaO}{{\mathrm{P\Gamma O}}}
\DeclareMathOperator{\tr}{{\mathrm{Tr}}}
\DeclareMathOperator{\GL}{{\mathrm{GL}}}
\DeclareMathOperator{\F}{\mathbb{F}}
\DeclareMathOperator{\Aut}{{\mathrm{Aut}}}
\renewcommand{\le}{\leqslant}
\renewcommand{\ge}{\geqslant}
\title[Tactical decompositions in finite polar spaces and\dots]{Tactical decompositions in finite polar spaces and non-spreading classical group actions}
\author[Bamberg]{John Bamberg$^\dagger$}
\author[Giudici]{Michael Giudici$^\dagger$}
\author[Lansdown]{Jesse Lansdown$^\ddagger$}
\author[Royle]{Gordon F. Royle$^\dagger$}
\address{$^\dagger$Centre for the Mathematics of Symmetry and Computation, Department of Mathematics and Statistics, The University of Western Australia, Perth, WA 6009, Australia.}
\email{$^\dagger$firstname.lastname@uwa.edu.au}
\address{$^\ddagger$School of Mathematics and Statistics, University of Canterbury, 
Christchurch, New Zealand}
\email{$^\ddagger$firstname.lastname@canterbury.ac.nz}
\dedicatory{Dedicated to the memory of Kai-Uwe Schmidt.}
\begin{document}

\maketitle

\begin{abstract}
For finite classical groups acting naturally on the set of points of their ambient polar spaces,
the symmetry properties of \emph{synchronising} and \emph{separating} are equivalent
to natural and well-studied problems on the existence of certain configurations
in finite geometry. The more general class of \emph{spreading} permutation groups
is harder to describe, and it is the purpose of this paper to explore
this property for finite classical groups. In particular,
we show that for most finite classical groups, their natural action on the points 
of its polar space is non-spreading. We develop and use a result on tactical 
decompositions (an \emph{AB-Lemma})
that provides a useful technique for finding witnesses for non-spreading permutation groups.
We also consider some of the other primitive actions of the classical groups.
\end{abstract}

\section{Introduction}

Permutation groups can be naturally separated into classes between primitive and $2$-homogeneous according to the \emph{synchronisation hierarchy}.
The three major layers in the hierarchy are the \emph{synchronising}, \emph{separating}, and \emph{spreading} permutation groups. In general, the synchronisation and separation properties are better understood than the spreading property, since the construction (or proof of absence) of a suitable combinatorial witness is typically more tractable for these properties. A transitive permutation group $G$ acting on a set $\Omega$ is \emph{non-spreading} if there
exists a nontrivial multiset $X$ and a nontrivial set $Y$ such that $|X|$ divides $|\Omega|$ and $|X\star Y^g|$ is constant for all $g\in G$ and is \emph{spreading} otherwise\footnote{Here $\star$ denotes intersection which accounts for repetition in the multiset, more formally, $(X \star Y)(i) = X(i)Y(i)$ for multisets $X$ and $Y$, where $X(i)$ is the multiplicity of $i$ in $X$, and similarly for $Y(i)$. We will call a multiset \emph{trivial} if it is constant or there is only one element with nonzero multiplicity.}. Notice that a transitive subgroup of a non-spreading group is non-spreading.
Since such witnesses are difficult to determine, the spreading property was previously resolved with witnesses involving proper multisets in only a few ad-hoc cases. Most other examples either satisfied the stronger non-separating property or in some instances have obvious witnesses involving sets (for instance, the symmetric group $S_n$ acting on $k$-sets is always non-spreading \cite{AraujoCameronSteinberg2017}). In this paper we pioneer the first systematic approach to the generation of witnesses for non-spreading permutation groups in Theorem \ref{thm:normalsubgroup} by generalising and adapting the so-called ``AB-Lemma''\cite{Bayens} from finite geometry.
This result has also been used recently in showing that primitive groups of diagonal type
are non-spreading \cite{withSaul}.
Moreover, in Subsection \ref{sect:spreading} and 
Section \ref{sect:unitary_kspaces} we show that the classical groups in many of their natural actions are non-spreading, using a combination of Theorem \ref{thm:normalsubgroup} and suitable tactical decompositions.
In particular, we resolve the spreading question for the classical groups acting on totally singular $1$-spaces,  apart from the unitary case. We resolve the unitary case for dimension $5$ conditional upon Conjecture \ref{conjecture:tricky} about certain solutions to equations in finite fields. Finally, in Section \ref{sec:halfWitt} and the end of Section \ref{sect:unitary_kspaces}, we consider actions on higher dimensional subspaces that are also non-spreading. Since the classical groups are so intimately connected to the classical polar spaces, many of these results are described in terms of the geometric objects known as \emph{$k$-ovoids}.

\section{Tactical decompositions and the ``$AB$-Lemma''}

An incidence structure $(\mathcal{P},\mathcal{B},\mathcal{I})$ is 
a triple of sets with $\mathcal{I}\subseteq \mathcal{P}\times \mathcal{B}$. For instance,
the points and lines of the Euclidean plane, together with the incidence relation between them, form an incidence structure. It is customary to call the elements of $\mathcal{P}$ ``points'', 
and the elements of $\mathcal{B}$ ``blocks''.
Now suppose we have partitions $\{P_1,\ldots, P_s\}$ of the points and  
$\{B_1,\ldots, B_t\}$ of the blocks such that every block in $B_i$ is incident with $\lambda_{i,j}$ points of $P_j$, then we call the decomposition \emph{block-tactical}. Dually, we have \emph{point-tactical} decompositions, where every point in $P_j$ is incident with a constant number of blocks of $B_i$, dependent only on $P_j$ and $B_i$. A \emph{tactical decomposition} of $(\mathcal{P},\mathcal{B},\mathcal{I})$ is both block-tactical and point-tactical (see \cite[Section 1.1]{Dembowski}).
For example, the orbits of a subgroup of automorphisms on both points and blocks produces a tactical decomposition.

In finite geometry, block-tactical decompositions with two point-classes are often known in other terms.
For example, a \emph{$k$-ovoid} of a finite polar space is a set of points such that every maximal totally singular subspace
intersects in $k$ points. If we consider the incidence structure of points and maximal totally singular subspaces (as the \emph{blocks}),
then we have a block-tactical decomposition with two points-classes: the $k$-ovoid and its complement.
 Researchers have been particularly interested in the construction
of $k$-ovoids with cardinality half the size of the points, known as \emph{hemisystems}. One technique for both proof of existence and the construction of hemisystems
has been to consider groups of automorphisms that yield small block-tactical decompositions and then attempt to fuse orbits in a suitable manner. For instance, the Cossidente-Penttila examples of $\mathsf{H}(3,q^2)$, $q$ odd, arise from the orbits of the subfield subgroup $\POmega^-(4,q)$. Indeed the matrix of the point-tactical decomposition (which records the constants $\lambda_{i,j}$) is as follows (see \cite{CossidentePenttila}):
\[
\begin{bmatrix}
\frac{q+1}{2}&\frac{q+1}{2}&0&0\\
0&0&\frac{q+1}{2}&\frac{q+1}{2}\\
1&1&\frac{q-1}{2}&\frac{q-1}{2}
\end{bmatrix}
\]
In this example, we can take the union of the first and third point-classes (according to the first and third columns), 
and the second and fourth point-classes, and we have a matrix with $(q+1)/2$ in each entry, and thus
a tactical decomposition with two point-classes.
This example can also be constructed directly by the following result of Bayens \cite{Bayens} which extends the work of Cossidente and Penttila \cite{CossidentePenttila}.

\begin{theorem}[Bayens]
Let $\mathcal{S} = (\mathcal{P},\mathcal{B},\mathcal{I})$ be an incidence structure.
Let $A$ and $B$ be two subgroups of $\Aut(\mathcal{S})$ such that (i) $B$ is a normal subgroup of $A$, (ii) $A$ and $B$ have the same orbits on $\mathcal{P}$, (iii) each $A$-orbit on $\mathcal{B}$ splits into two $B$-orbits. Then there are $2^n$ hemisystems admitting $B$, where $n$ is the number of $A$-orbits on $\mathcal{B}$.
\end{theorem}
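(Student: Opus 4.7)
The plan is to construct the $2^n$ hemisystems explicitly as unions of $B$-orbits on $\mathcal{B}$, selecting exactly one orbit from each of the $n$ $A$-orbits. Write $P_1,\dots,P_s$ for the common $A$- and $B$-orbits on $\mathcal{P}$ and $B_1,\dots,B_n$ for the $A$-orbits on $\mathcal{B}$, each of which splits by hypothesis (iii) as $B_i = B_i^+\sqcup B_i^-$ into two $B$-orbits. Since $B\norml A$, the group $A$ permutes the $B$-orbits inside $B_i$; as $A$ acts transitively on $B_i$ and there are only two such $B$-orbits, we may pick $a_i\in A$ with $a_i\cdot B_i^+=B_i^-$, and in particular $|B_i^+|=|B_i^-|$.

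The key step is a tactical equality. Since $B$ acts by automorphisms, its orbit decomposition is tactical, so for every point class $P_j$ there is a well-defined constant $r_{i,j}^{\pm}$ giving the number of blocks in $B_i^{\pm}$ through a point $p\in P_j$. I claim $r_{i,j}^+=r_{i,j}^-$. Indeed, $a_i$ sends blocks of $B_i^+$ bijectively to blocks of $B_i^-$ while preserving incidence, and by hypothesis (ii) it stabilizes $P_j$ setwise, so $a_i(p)\in P_j$. Hence the blocks of $B_i^+$ through $p$ biject with the blocks of $B_i^-$ through $a_i(p)$, the latter count being $r_{i,j}^-$ by the $B$-tactical constancy. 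Combined with $r_{i,j}^+ + r_{i,j}^- = r_{i,j}$ (the corresponding $A$-tactical constant), this forces $r_{i,j}^{\pm}=r_{i,j}/2$.

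For each sign vector $\epsilon\in\{+,-\}^n$ I would then set $\mathcal{H}_\epsilon = \bigsqcup_{i=1}^n B_i^{\epsilon_i}$. By construction $\mathcal{H}_\epsilon$ is a union of $B$-orbits and hence admits $B$. For any $p\in P_j$, the number of blocks of $\mathcal{H}_\epsilon$ through $p$ equals $\sum_i r_{i,j}^{\epsilon_i} = \tfrac{1}{2}\sum_i r_{i,j}$, exactly half the total number of blocks of $\mathcal{B}$ through $p$, so $\mathcal{H}_\epsilon$ is a hemisystem. Since the $2n$ orbits $B_i^{\pm}$ are pairwise disjoint and nonempty, distinct sign patterns yield distinct $\mathcal{H}_\epsilon$, producing the desired $2^n$ hemisystems admitting $B$.

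The crux of the argument is the tactical equality $r_{i,j}^+=r_{i,j}^-$: here all three hypotheses interact, with normality producing the swap element $a_i\in A$, the coincidence of $A$- and $B$-point orbits forcing $a_i$ to stabilize each $P_j$, and the 2-to-1 block splitting guaranteeing that such a swap actually lies in $A$. Once this equality is in hand, the construction and count are essentially bookkeeping.
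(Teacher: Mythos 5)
Your proposal is correct, and there is nothing in the paper to compare it against: the paper states this result as a quotation of Bayens (citing his thesis) and gives no proof of it, immediately moving on to its own adaptation (Theorem~2.2). Your argument --- pick one of the two $B$-orbits from each $A$-orbit on blocks, and use a swapping element $a_i\in A\setminus B$ together with the $B$-tactical constants and the coincidence of $A$- and $B$-orbits on points to show $r_{i,j}^+=r_{i,j}^-$ --- is the standard proof of the AB-Lemma and is sound; it is also the same mechanism (an element of $A$ interchanging $B$-orbits while fixing the relevant point classes) that drives the paper's proof of its Theorem~2.2.
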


This result, which is essentially about ``regular'' tactical decompositions arising from a group action, is known in finite geometry
circles as the \emph{AB-Lemma}. It was originally used for the construction of hemisystems of Hermitian generalised quadrangles, and later to hemisystems of finite polar spaces (\cite{CP2016}, \cite{LansdownNiemeyer}). A generalisation to $k$-ovoids was given in \cite[Theorem 6.1.2]{LansdownPhD}. A key result of this paper, Theorem \ref{thm:normalsubgroup}, is an adaptation of Bayens' AB-Lemma. It has already been a crucial component in determining the non-spreading property of
certain primitive permutation groups \cite{withSaul}.

\begin{theorem}
\label{thm:normalsubgroup}
Let $G$ be a group acting on $\Omega$, and let $A$ and $B$ be subgroups of $G$ such that $B \triangleleft A$. Let $\omega_1, \ldots, \omega_k \in \Omega$ with $k\geqslant 2$ such that  $\omega_i^B \neq \omega_j^B$ for $i \neq j$, and $\omega_1^A = \omega_1^B \cup \omega_2^B \cup \ldots \cup \omega_k^B$. Let $X \subset \Omega$, and $\Delta = \{Y \in X^G \mid Y \cap \omega_1^A \neq \varnothing\}$.
If the orbits of $A$ and $B$ on $\Delta$ are the same, then $(X,\Omega + k\omega_1^B - \omega_1^A)$ is a witness to $G$ being non-spreading (where $k \omega_1^B$ is the multiset corresponding to $\omega_1^B$ with each entry assigned multiplicity $k$).
\end{theorem}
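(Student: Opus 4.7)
The plan is to verify directly that the pair $(X, M)$, with $M := \Omega + k\omega_1^B - \omega_1^A$, satisfies the witness conditions. Since $B \triangleleft A$, the $B$-orbits $\omega_1^B, \ldots, \omega_k^B$ partitioning $\omega_1^A$ all have the same cardinality, so $M$ has multiplicity $k$ on $\omega_1^B$, multiplicity $0$ on $\omega_1^A \setminus \omega_1^B$, and multiplicity $1$ on $\Omega \setminus \omega_1^A$. Summing, $|M| = |\Omega|$, which gives the divisibility condition for free, and $M$ is nontrivial since it takes three distinct multiplicities $0$, $1$, $k \ge 2$ on nonempty sets.

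From this three-level description of $M^g$, one obtains
\[
|X \star M^g| \;=\; |X| \;-\; |X \cap (\omega_1^A)^g| \;+\; k\,|X \cap (\omega_1^B)^g|.
\]
Writing $Y = X^{g^{-1}}$, which ranges over $X^G$ as $g$ ranges over $G$, the claim that $|X \star M^g|$ is constant in $g$ reduces to the single identity
\[
|Y \cap \omega_1^A| \;=\; k\,|Y \cap \omega_1^B| \qquad \text{for every } Y \in X^G.
\]

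If $Y \notin \Delta$ this identity is trivial, so assume $Y \in \Delta$. For each $i = 1, \ldots, k$ choose $a_i \in A$ with $\omega_i = \omega_1^{a_i}$; normality of $B$ in $A$ gives $\omega_i^B = (\omega_1^B)^{a_i}$, whence $|Y \cap \omega_i^B| = |Y^{a_i^{-1}} \cap \omega_1^B|$. Since $\omega_1^A$ is $A$-stable, $Y^{a_i^{-1}}$ also lies in $\Delta$, and by hypothesis the $A$-orbit and $B$-orbit of $Y$ on $\Delta$ coincide. Hence $Y^{a_i^{-1}} = Y^{b_i}$ for some $b_i \in B$, and since $B$ stabilises $\omega_1^B$ setwise, $|Y^{b_i} \cap \omega_1^B| = |Y \cap \omega_1^B|$. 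Summing over $i$ yields $|Y \cap \omega_1^A| = k|Y \cap \omega_1^B|$.

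The crux is the last step, where the hypothesis on coinciding $A$- and $B$-orbits on $\Delta$ must be fused with the normality $B \triangleleft A$ to collapse the counts $|Y \cap \omega_i^B|$ to a common value. Normality is needed to realise each $\omega_i^B$ as an $A$-image of $\omega_1^B$, while the orbit hypothesis is needed to convert the resulting $A$-displacement of $Y$ into a $B$-displacement that preserves $\omega_1^B$; neither ingredient suffices alone.
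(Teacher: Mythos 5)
Your proposal is correct and follows essentially the same route as the paper's proof: both use normality of $B$ in $A$ to write $\omega_i^B$ as an $A$-translate of $\omega_1^B$, then use the coincidence of $A$- and $B$-orbits on $\Delta$ to replace that $A$-translation of $Y$ by a $B$-translation fixing $\omega_1^B$ setwise, giving $|Y\cap\omega_i^B|=|Y\cap\omega_1^B|$ and hence $|Y\cap\omega_1^A|=k|Y\cap\omega_1^B|$. Your write-up is in fact slightly more explicit than the paper's on the bookkeeping (the three-level description of the multiset and the observation that $|\omega_1^A|=k|\omega_1^B|$ forces $|M|=|\Omega|$), but the underlying argument is identical.
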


\begin{proof}
 For $i \in \{1, \ldots, k\}$, there exists $a_i \in A$ be such that $\omega_1 = \omega_i^{a_i}$. Let $Z \in \Delta$, then there exists $b_i \in B$ such that $Z^{a_i^{-1}b_i}=Z$ since $A$ and $B$ have the same orbits on $\Delta$. Moreover, $a_iB = Ba_i$ since $B \triangleleft A$. Hence
\[
|Z \cap \omega_1^B| = |Z \cap (\omega_i^{a_i})^B| = |Z \cap (\omega_i^B)^{a_i}| 
= |Z^{a_i^{-1}b_i} \cap \omega_i^B| = |Z \cap \omega_i^B|.
\]
Instead take $Z \in X^G \backslash \Delta$. Then $|Z \cap \omega_i^B| =0$ by definition of $\Delta$.
It follows that $|X^g \cap \omega_1^B| = |X^g \cap \omega_i^B|$ for all $g \in G$ and $i\in\{1,\ldots,k\}$, and so $k|X^g \cap \omega_1^B| = |X^g \cap \omega_1^A|$. Since $k>1$, we have that  $\Omega + k\omega_1^B - \omega_1^A$ is a non-trivial multiset such that $|X^g \cap (\Omega + k\omega_1^B - \omega_1^A)| = |X|$ and $|\Omega + k\omega_1^B - \omega_1^A|= |\Omega|$ which divides $|\Omega|$. Hence $(X, \Omega + k\omega_1^B - \omega_1^A)$ is a witness to $G$ being non-spreading.
\end{proof}

Of course, it is a non-trivial exercise to determine a suitable set $X$ in Theorem \ref{thm:normalsubgroup}, not to mention suitable subgroups $A$ and $B$. Sometimes, however, the process of determining them is guided by the problem at hand. For example, given an incidence structure $(\mathcal{P},\mathcal{B},\mathcal{I})$, if we are interested in the action of $G \le {\rm Aut}\left((\mathcal{P},\mathcal{B},\mathcal{I})\right)$ on $\mathcal{P}$, then the set of points incident with a suitable block is a likely candidate for $X$. In this case $\Delta$ is (possibly a subset of) the blocks which meet some element of a given $A$-orbit on points. In this way Theorem \ref{thm:normalsubgroup} mimics the key properties of the AB-Lemma while recognising that witnesses for spreading need only satisfy these properties locally.

\section{Classical groups in their actions on singular $1$-spaces}

In this section we survey the results on classical groups which are spreading in their action on totally singular $1$-dimensional subspaces. The natural geometric setting for the classical groups are the polar spaces. We will adopt this framework, since many of the witnesses to non-spreading are given by well studied geometric objects. The projective dimension is used, so a singular $1$-dimensional subspace is called a ``point''.

Recall that spreading sits above separating and synchronising in the synchronisation hierarchy. That is, spreading implies separating, which in turn implies synchronising \cite[Corollary 5.5, Theorem 5.14]{AraujoCameronSteinberg2017}. Hence we also consider separating and synchronising in our survey, since if a group is non-synchronising or non-separating, then it is also immediately non-spreading.
For completeness, we also now give the definitions of synchronising and separating permutation groups.

Let $G$ be a permutation group acting on $\Omega$. Given a partition $\Pi$ of $\Omega$, a \emph{transversal} $X$ is a subset of $\Omega$ which meets each part of $\Pi$ in exactly one element. We say $\Pi$ is \emph{section-regular} if there is some transversal $X$ such that $X^g$ is also a transversal for all $g \in G$. We call $G$ \emph{non-synchronising} if there exists a non-trivial section-regular partition and \emph{synchronising} otherwise. We see directly from this definition that a synchronising permutation group is primitive. If there exist subsets $X$ and $Y$ of $\Omega$ such that $|X||Y|=|\Omega|$ and $|X^g \cap Y|=1$ for all $g \in G$ then $G$ is \emph{non-separating} and \emph{separating} otherwise. Notice that if $\Pi$ is a section-regular partition for $G$ acting on $\Omega$, with transversal $A$ and part $B$, then
$|A|\cdot |B|=|\Omega|$ and $|A^g\cap B|=1$ for all $g\in G$. Therefore, a non-synchronising group action is non-separating.
As with non-spreading groups, any transitive subgroup of a non-synchronising, respectively non-separating, group is also non-synchronising, respectively non-separating.

\subsection{Background on polar spaces}

The geometric setting of this paper is in the \textit{finite classical 
polar spaces}; those geometries which come from
a finite vector space equipped with a reflexive sesquilinear form or
quadratic form. We will assume that the reader is familiar with the fundamental
theory of polar spaces and we will use projective notation for polar spaces so that 
they differ from the standard notation for their collineation groups.
For more background on polar spaces see \cite{HirschfeldThas}.
For example, we will use the notation $\mathsf{W}(d-1,q)$ to denote the symplectic polar
space coming from the vector space $\F_q^d$ equipped with a
non-degenerate alternating form. We will be using algebraic dimensions throughout this paper,
but occasionally using projective terminology. So a 1-dimensional subspace will sometimes be referred
to as a \emph{point}. Here is a summary of the notation we
will use for polar spaces. Note that the action of the collineation group on the points is always primitive.

\begin{table}[H]
\begin{tabular}{ccc}
\toprule
Polar Space&Notation&Collineation Group\\
\midrule
Symplectic&$\mathsf{W}(d-1,q)$, $d$ even&$\PGammaSp(d,q)$\\
Hermitian&$\mathsf{H}(d-1,q^2)$, $d$ odd&$\PGammaU(d,q)$\\
Hermitian&$\mathsf{H}(d-1,q^2)$, $d$ even&$\PGammaU(d,q)$\\
Orthogonal, elliptic&$\mathsf{Q}^-(d-1,q)$, $d$ even&$\PGammaO^-(d,q)$\\
Orthogonal, parabolic&$\mathsf{Q}(d-1,q)$, $d$ odd&$\PGammaO(d,q)$\\
Orthogonal, hyperbolic&$\mathsf{Q}^+(d-1,q)$, $d$ even&$\PGammaO^+(d,q)$\\
\bottomrule
\end{tabular}
\medskip
\caption{Notation for the finite polar spaces.}
\end{table}

The points and maximal totally singular subspaces of $\mathcal{P}$ form an incidence structure, and so a \emph{$k$-ovoid} of $\mathcal{P}$ is a subset of points such that every maximal totally singular subspace of $\mathcal{P}$ 
has precisely $k$ points incident with it. We refer to a $1$-ovoid simply as an \emph{ovoid}. We say that a $k$-ovoid is \emph{nontrivial} if it is nonempty and not equal to the entire set of points. Exchanging points and maximal totally singular subspaces gives us the complementary notion of a \emph{spread}: a subset of totally singular subspaces of $\mathcal{P}$ which partition the points.

\subsection{Separating}

By \cite[Theorem 6.8]{AraujoCameronSteinberg2017}, a classical group acting on singular points of the associated polar space is non-separating if and only if its polar space possesses an ovoid (that is, a $1$-ovoid).
Much of the following discussion can also be found in \cite{DeBeule_et_al}.
Thas \cite{Thas81} showed that $\mathsf{W}(2n-1,q)$ does not have an ovoid for all odd $q$ and for all $n\ge 2$. 
Also, Thas showed that $\mathsf{Q}^-(2n-1,q)$ does not have an ovoid for all $n\ge 3$,
and $\mathsf{H}(2n,q^2)$ does not have an ovoid for all $n\ge 2$. 
The generalised quadrangle $\mathsf{Q}(4,q)$ has an ovoid for all $q$; just take an elliptic quadric section.
So for all $q$, $\POmega(5,q)$ is non-separating on singular points. 
For larger dimension, it is more complicated.
Firstly, $\mathsf{Q}(2n,q)$, $n \ge 3$, $q$ even, has no ovoid, because $\mathsf{Q}(2n,q)$ is isomorphic
to $\mathsf{W}(2n-1,q)$ when $q$ is even.
For $q$ odd, it is known that ovoids of $\mathsf{Q}(2n,q)$ can only exist when $n\le 3$.
Moreover, there exists an ovoid of $\mathsf{Q}(6,q)$ whenever $q$ is a power of $3$
(e.g., the \emph{Thas-Kantor} ovoids). It is known that $\mathsf{Q}(6,q)$ does not have an ovoid when 
$q$ is prime and $q\ne 3$ (see \cite{BallGovaertsStorme}).
Now for plus type orthogonal groups. Firstly, $\mathsf{Q}^+(3,q)$ always has conic sections, which are ovoids. 
Secondly, $\mathsf{Q}^+(5,q)$ always has ovoids since $\PG(3,q)$ always has spreads (take a regular spread),
and the Klein correspondence. It is conjectured that $\mathsf{Q}^+(9,q)$ does not have ovoids, but we only have
a positive result for $q\in\{2,3\}$. 
It was shown in \cite{DeBeule_et_al2} that $\mathsf{Q}^+(2n+1,q)$ does not have an ovoid for $n>q^2$, and similarly, it was shown in \cite{Klein} that $\mathsf{H}(2n+1,q^2)$ does not have an ovoid for $n>q^3$.
Finally, $\mathsf{H}(3,q^2)$ has an ovoid for all $q$; a non-degenerate hyperplane section provides a simple example. Therefore:

\begin{theorem}[\cite{Thas81}]\leavevmode
\begin{enumerate}[(a)]
\item 
$\PSp(2n,q)$ is separating on points for all odd prime-powers $q$ and for all $n\ge 2$.
\item 
$\POmega^-(2n,q)$ is separating on singular points for all prime-powers $q$ and $n\ge 3$. 
\item 
$\PSU(2n+1,q)$ is separating on totally isotropic points for all prime-powers $q$ and $n\ge 2$.
\item For all prime-powers $q$, $\PGammaO(5,q)$ is non-separating on singular points. 
\item For all $q$ a power of $3$, $\PGammaO(7,q)$ is non-separating on singular points. 
\item For all primes $p\ne 3$, $\POmega(7,p)$ is separating on singular points. 
\item For all prime-powers $q$ and for all $n\ge 4$, $\POmega(2n+1,q)$ is separating on singular points. 
\item For all prime-powers $q$, $\POmega^+(4,q)$ and $\POmega^+(6,q)$ are non-separating. 
\item For all primes $p$, $\POmega^+(8,p)$ is non-separating. 
\item For all $q$ a power of $2$ or power of $3$, $\POmega^+(8,q)$ is non-separating. 
\item For all prime-powers $q$, $\PSU(4,q)$ is non-separating. 
\end{enumerate}
\end{theorem}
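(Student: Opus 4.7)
The plan is to reduce every assertion to an ovoid existence/non-existence statement for the corresponding polar space, via \cite[Theorem 6.8]{AraujoCameronSteinberg2017}: a classical group acting on the singular points of its polar space is non-separating if and only if the polar space admits a (nontrivial) ovoid. The body of the theorem is then a compilation of known geometric results, most of which are already catalogued in the discussion preceding the statement.

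First I would dispose of the separating assertions (non-existence of ovoids). Parts (a)--(c) are immediate from Thas's non-existence theorems in \cite{Thas81} for $\mathsf{W}(2n-1,q)$ with $q$ odd, for $\mathsf{Q}^-(2n-1,q)$ with $n\geq 3$, and for $\mathsf{H}(2n,q^2)$ with $n\geq 2$, applied via the above characterisation. Part (f) follows from the Ball--Govaerts--Storme non-existence result \cite{BallGovaertsStorme} for $\mathsf{Q}(6,p)$ with $p$ prime and $p\neq 3$. Part (g) follows since, for $n\geq 4$, parabolic quadrics $\mathsf{Q}(2n,q)$ have no ovoid: for $q$ even this reduces to $\mathsf{W}(2n-1,q)$, and for $q$ odd this is a consequence of Thas's bound combined with Gunawardena--Moorhouse type arguments that force $n\leq 3$.

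Next I would establish the non-separating assertions by exhibiting an explicit ovoid in each case. For (d), the intersection of $\mathsf{Q}(4,q)$ with a suitable hyperplane gives an elliptic quadric ovoid. For (e), the Thas--Kantor ovoids of $\mathsf{Q}(6,q)$ exist whenever $q$ is a power of $3$. For (h), conic sections provide ovoids of $\mathsf{Q}^+(3,q)$, while ovoids of $\mathsf{Q}^+(5,q)$ arise from regular spreads of $\PG(3,q)$ under the Klein correspondence; here one must be careful to match projective dimensions to the groups $\POmega^+(4,q)$ and $\POmega^+(6,q)$. For (i) and (j), ovoids of $\mathsf{Q}^+(7,q)$ are known for $q$ prime and for $q$ a power of $2$ or $3$ (e.g. via triality together with Suzuki/Ree ovoids of $\mathsf{Q}(6,q)$, or Kantor's binary/ternary constructions). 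Finally, for (k), a non-degenerate hyperplane section of $\mathsf{H}(3,q^2)$ yields an ovoid, so $\PSU(4,q)$ is non-separating.

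The main obstacle, and really the only non-bookkeeping step, is making sure that the invoked ovoid existence/non-existence statements are cited in the precise generality claimed in the theorem, particularly in part (g) where one must combine two different arguments for even and odd $q$, and in parts (i)--(j) where the existence statements for $\mathsf{Q}^+(7,q)$ draw on several different constructions depending on the characteristic. Beyond matching these pieces correctly to the polar space/group dictionary in Table~1, no additional geometric input is required.
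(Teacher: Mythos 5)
Your proposal follows essentially the same route as the paper: the theorem is justified there by the survey paragraph immediately preceding it, which reduces each part to the existence or non-existence of an ovoid via \cite[Theorem 6.8]{AraujoCameronSteinberg2017} and then cites the same catalogue of results (Thas for (a)--(c), elliptic quadric sections for (d), Thas--Kantor ovoids for (e), Ball--Govaerts--Storme for (f), the $q$ even/odd split for (g), conics and the Klein correspondence for (h), and hyperplane sections of $\mathsf{H}(3,q^2)$ for (k)). Your attributions for (g) and for the $\mathsf{Q}^+(7,q)$ ovoids in (i)--(j) are in fact slightly more explicit than the paper's own discussion, which leaves those existence results uncited; otherwise the two arguments coincide.
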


\begin{open}
Does $\mathsf{Q}(6,q)$ have an ovoid if and only if $q$ is a power of $3$?
\end{open}

The best known bounds on ovoids of
Hermitian spaces and hyperbolic quadrics are due to Blokhuis and Moorhouse
\cite{Blokhuis1995,Moorhouse1996}, but they primarily involve the characteristic of the underlying field rather than the order of the field. For instance, if $p$ is prime and $q=p^h$, then  there are no ovoids of
$\mathsf{Q}^{+}(2r+1,q)$ when $p^{r}>\binom{2r+p}{2r+1}-\binom{2r+p-2}{2r+1}$. Thus for $p=2$ or $p=3$, we
see that no ovoids exist for $r\ge 4$.

\begin{open}
For which $q$ does $\mathsf{Q}^+(7,q)$ have an ovoid?
\end{open}

The smallest unresolved case is $q=25$.

\begin{open}
Does $\mathsf{Q}^+(2n-1,q)$ have an ovoid if $n\ge 5$?
\end{open}

By De Beule and Metsch \cite{DeBeuleMetsch}, $\mathsf{H}(5,4)$ does not have an ovoid.

\begin{open}
Does $\mathsf{H}(2n-1,q)$ have an ovoid for any $n\ge 3$?
\end{open}

\subsection{Synchronising}

By \cite[Theorem 6.8]{AraujoCameronSteinberg2017} a classical group acting on the singular points of the associated polar space is non-synchronising if and only if its polar space possesses either
\begin{enumerate}[(i)]
\item an ovoid and a spread; or 
\item a partition of the points into ovoids.
\end{enumerate}

\begin{theorem}
The following groups are separating and hence synchronising:
\begin{itemize}
\item $\PSp(2n,q)$ on points, for all odd $q$ and for all $n\ge 2$;
\item $\POmega^-(2n,q)$ on singular points for all prime-powers $q$ and $n\ge 3$;
\item $\PSU(2n+1,q)$ on singular points for all prime-powers $q$ and $n\ge 2$;
\item $\POmega(7,p)$ on singular points is separating for all primes $p\ne 3$;
\item $\POmega(2n+1,q)$ on singular points for all prime-powers $q$ and for all $n\ge 4$.
\end{itemize}
\end{theorem}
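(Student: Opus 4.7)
The plan is to observe that this theorem is a direct corollary of the previous separating theorem together with the standard implication in the synchronisation hierarchy. Indeed, each of the five bullet points here matches verbatim one of the parts (a), (b), (c), (f), (g) of the earlier theorem attributed to Thas, and those already established that the listed groups are separating on the relevant singular points.

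Accordingly, the first step is simply to cite each bullet of the prior theorem: $\PSp(2n,q)$ with $q$ odd and $n\ge 2$ is separating on points by (a); $\POmega^-(2n,q)$ with $n\ge 3$ is separating on singular points by (b); $\PSU(2n+1,q)$ with $n\ge 2$ is separating on totally isotropic points by (c); $\POmega(7,p)$ for primes $p\ne 3$ is separating by (f); and $\POmega(2n+1,q)$ for $n\ge 4$ is separating by (g). No new geometric input is required, since the classification of polar spaces admitting an ovoid, which underlies the separating results via \cite[Theorem 6.8]{AraujoCameronSteinberg2017}, has already been invoked.

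The second step is to appeal to the hierarchy implication recalled at the beginning of this section: separating implies synchronising, by \cite[Corollary 5.5, Theorem 5.14]{AraujoCameronSteinberg2017}. Applying this implication termwise to the five bullets yields the synchronising conclusion in each case, completing the proof.

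There is essentially no obstacle here, since the statement is assembled by combining two earlier facts. The only thing to be careful about is alignment of the hypotheses between the two theorems (e.g., that the action is on singular or totally isotropic points in the relevant sense, and that the parameter ranges agree), but this is immediate from inspection. If one wanted to present the argument more uniformly, one could package it as a single remark: whenever a classical group is known to act separating on singular points of its polar space, the corresponding synchronising statement follows at once from the hierarchy, and the list above is exactly the collection of such cases extracted from the preceding theorem.
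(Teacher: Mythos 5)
Your proposal is correct and matches the paper's (implicit) reasoning exactly: the five bullets are parts (a), (b), (c), (f), (g) of the preceding theorem on separation, and the synchronising conclusion follows from the hierarchy implication recalled at the start of the section. The paper offers no further argument, so nothing is missing.
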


\subsubsection{Orthogonal-parabolic spaces}

Suppose $q$ is a power of a prime $p$.
Firstly, an elliptic quadric hyperplane section of $\mathsf{Q}(4,q)$ is an ovoid.
Since $\mathsf{Q}(4,q)$ is the dual of $\mathsf{W}(3,q)$, and self-dual for $q$ even,
it follows that $\mathsf{Q}(4,q)$ has a spread if and only if $q$ is even. So 
$\POmega(5,q)$ is non-synchronising for all even $q$. Ball \cite{Ball04} showed that an ovoid of $\mathsf{Q}(4,q)$ meets every elliptic quadric in $1$ modulo $p$ points. Ball, Storme and Govaerts \cite{BallGovaertsStorme} showed earlier that every ovoid of $\mathsf{Q}(4,p)$ is an elliptic quadric, and so it
follows from combining both results that there is no partition of the points of $\mathsf{Q}(4,p)$ into ovoids. Therefore,

\begin{theorem}\leavevmode 
\begin{enumerate}[(i)]
    \item $\PGammaO(5,q)$ on singular points is non-synchronising for all even $q$. 
    \item $\POmega(5,p)$ on singular points is synchronising for all odd primes $p$.
\end{enumerate}
\end{theorem}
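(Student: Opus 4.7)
The plan is to apply the characterisation recalled at the start of this subsection: the action of $\POmega(5,q)$ on the singular points of $\mathsf{Q}(4,q)$ is non-synchronising exactly when $\mathsf{Q}(4,q)$ admits either (i) both an ovoid and a spread, or (ii) a partition of its points into ovoids. The proof thus reduces to producing or excluding these configurations in the two parity cases, and most of the ingredients have already been collected in the preceding discussion.

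For part (i), with $q$ even, a non-degenerate elliptic hyperplane section of $\mathsf{Q}(4,q)$ provides an ovoid, as noted at the start of the subsection on separating. For a spread, I invoke the observation just recalled that $\mathsf{Q}(4,q)$ is the point-line dual of $\mathsf{W}(3,q)$ and is self-dual for $q$ even; hence it possesses a spread whenever $q$ is even. Both objects exist, so condition (i) of the characterisation is met and $\PGammaO(5,q)$ is non-synchronising.

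For part (ii), with $p$ an odd prime, I need to rule out both conditions. For (i), since $\mathsf{Q}(4,p)$ is dual to $\mathsf{W}(3,p)$, a spread of $\mathsf{Q}(4,p)$ would correspond to an ovoid of $\mathsf{W}(3,p)$; but Thas's theorem \cite{Thas81} forbids ovoids of $\mathsf{W}(2n-1,q)$ in odd characteristic, so no such spread exists. For (ii), I combine the two results of Ball \cite{Ball04} and Ball--Govaerts--Storme \cite{BallGovaertsStorme} already cited in the excerpt: every ovoid of $\mathsf{Q}(4,p)$ is an elliptic quadric, and any ovoid of $\mathsf{Q}(4,p)$ meets every elliptic quadric in $1$ modulo $p$ points. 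Consequently any two ovoids of $\mathsf{Q}(4,p)$ share a positive number of points, so no two ovoids can be disjoint, let alone a partitioning family. Neither condition of the characterisation is satisfied, and $\POmega(5,p)$ is therefore synchronising.

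There is no serious technical obstacle here; the argument is essentially a bookkeeping exercise that glues together the citations of the preceding paragraphs. The only subtle point worth flagging is the source of the primality hypothesis in part (ii): it is exactly the Ball--Govaerts--Storme restriction to prime fields that prevents the synchronising conclusion from being extended from odd primes $p$ to arbitrary odd prime-powers $q$, since without the classification of ovoids as elliptic quadrics one cannot conclude that every pair of ovoids meets.
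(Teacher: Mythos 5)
Your proposal is correct and follows essentially the same route as the paper: an elliptic quadric ovoid together with self-duality of $\mathsf{Q}(4,q)$ for $q$ even gives condition (i) of the characterisation, while for odd primes the absence of spreads (via Thas and duality with $\mathsf{W}(3,p)$) and the combination of Ball's $1 \bmod p$ intersection result with the Ball--Govaerts--Storme classification of ovoids of $\mathsf{Q}(4,p)$ as elliptic quadrics rules out both conditions. Your closing remark correctly identifies why the primality hypothesis is needed, which matches the paper's subsequent open problem for $q$ odd and non-prime.
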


However, when $q$ is not prime, there exist ovoids of $\mathsf{Q}(4,q)$ that are not elliptic quadrics. (See Blair Williams' thesis \cite{Williams} for an excellent summary.)

\begin{open}
Does $\mathsf{Q}(4,q)$ have a partition into ovoids for $q$ odd and not prime?
\end{open}

The case $q=9$ can be easily resolved by computer. There do exist $5$ disjoint Knuth ovoids, but no bigger set of disjoint ovoids.

For $q$ even, it is known that ovoids of $\mathsf{Q}(2n,q)$ can only exist when $n=2$,
because $\mathsf{Q}(2n,q)$ is isomorphic to $\mathsf{W}(2n-1,q)$ (for $q$ even).
For $q$ odd, it is known that ovoids of $\mathsf{Q}(2n,q)$ can only exist when $n\le 3$.

\begin{open}
Does $\mathsf{Q}(6,q)$ have an ovoid if and only if $q$ is a power of $3$?
\end{open}

If $q$ is an odd power of $3$, then there will also be a spread of $\mathsf{Q}(6,q)$.

\begin{theorem}
$\PGammaO(7,3^{2h+1})$ acting on singular points is non-synchronising.
\end{theorem}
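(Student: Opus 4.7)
The plan is to invoke the characterization of non-synchronising classical groups from \cite[Theorem 6.8]{AraujoCameronSteinberg2017}, already quoted at the start of this subsection: a classical group acting on the singular points of its polar space is non-synchronising if and only if the polar space possesses either (i) both an ovoid and a spread, or (ii) a partition of the points into ovoids. Accordingly, I aim to exhibit, for $q = 3^{2h+1}$, both an ovoid and a spread of $\mathsf{Q}(6,q)$. The non-synchronising conclusion for $\PGammaO(7,q)$ on singular points will then follow immediately from case (i), since non-synchronisation of the underlying permutation structure transfers to any transitive overgroup inside $\Aut(\mathsf{Q}(6,q))$.

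For the ovoid, I would simply quote the Thas--Kantor ovoids of $\mathsf{Q}(6,q)$, whose existence for every $q$ that is a power of $3$ has already been recorded in the preceding survey on separating, and so is available in particular for $q=3^{2h+1}$. For the spread, I would invoke Kantor's classical construction of a spread of $\mathsf{Q}(6,q)$ valid whenever $q$ is an odd power of $3$: this spread is stabilised by the Ree group ${}^{2}G_{2}(q) \le \POmega(7,q)$, and its existence is precisely what is asserted in the sentence immediately preceding the theorem statement. A conceptually convenient alternative is to apply a polarity of $\mathsf{Q}(6,q)$ to the Thas--Kantor ovoid, exploiting the self-duality of the parabolic quadric in characteristic $3$; this converts an ovoid into a spread directly.

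With a Thas--Kantor ovoid and a Kantor spread both in hand, $\mathsf{Q}(6,3^{2h+1})$ satisfies condition (i) of the characterization, which closes the argument. The only non-routine ingredient is the existence of Kantor's spread for $q$ an odd power of $3$; once this deep classical input is granted, the non-synchronising conclusion is a one-line consequence of the quoted characterization together with the already-catalogued ovoid. In particular, no new combinatorial construction is required: the work has been offloaded to previously established geometry.
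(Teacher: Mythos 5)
Your argument is essentially the paper's: the theorem is stated there without an explicit proof precisely because it follows immediately from the characterisation in \cite[Theorem 6.8]{AraujoCameronSteinberg2017} together with the two facts recorded in the surrounding text, namely the existence of Thas--Kantor ovoids of $\mathsf{Q}(6,q)$ for every $q=3^h$ and of a spread of $\mathsf{Q}(6,q)$ for $q$ an odd power of $3$. One caution: your ``conceptually convenient alternative'' is not sound as stated --- the self-dual object in characteristic $3$ is the split Cayley hexagon rather than the quadric $\mathsf{Q}(6,q)$ itself, and a spread of the hexagon is a set of totally singular \emph{lines} rather than planes, so it is not a spread of the polar space; you should rely on the cited classical spread construction for $q=3^{2h+1}$ rather than on this duality shortcut.
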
 

\subsubsection{Orthogonal-hyperbolic spaces}

The spaces $\mathsf{Q}^+(4n+1,q)$ cannot have spreads, since disjoint maximal totally singular subspaces must belong to different equivalence classes. (In other words, a partial spread has size at most $2$).
However, a partition of $\mathsf{Q}^+(5,q)$ into ovoids corresponds (via the Klein correspondence) to a \emph{packing} of $\PG(3,q)$,
and they exist for all $q$ (due to a result of Denniston \cite{Denniston}).

\begin{theorem}
$\POmega^+(6,q)$ on singular points is non-synchronising for all $q$.
\end{theorem}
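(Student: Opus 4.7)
The plan is to apply the non-synchronising criterion recorded in the excerpt (from \cite[Theorem 6.8]{AraujoCameronSteinberg2017}): it suffices to exhibit either (i) an ovoid and a spread of $\mathsf{Q}^+(5,q)$, or (ii) a partition of the points of $\mathsf{Q}^+(5,q)$ into ovoids. Since the paper has just observed that $\mathsf{Q}^+(4n+1,q)$ admits no spread (any partial spread of maximals has size at most $2$), route (i) is closed off, so I would pursue route (ii): produce, for every prime power $q$, a partition of the points of $\mathsf{Q}^+(5,q)$ into ovoids.

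The natural mechanism is the Klein correspondence, which gives a bijection between the lines of $\PG(3,q)$ and the points of $\mathsf{Q}^+(5,q)$, sending line-spreads of $\PG(3,q)$ to ovoids of $\mathsf{Q}^+(5,q)$. Under this correspondence, a \emph{packing} of $\PG(3,q)$, meaning a partition of the line set of $\PG(3,q)$ into spreads, translates directly into a partition of the points of $\mathsf{Q}^+(5,q)$ into ovoids. The numerical check is immediate: $\PG(3,q)$ has $(q^2+1)(q^2+q+1)$ lines, each spread has $q^2+1$ lines, so a packing consists of $q^2+q+1$ spreads, matching the $q^2+q+1$ ovoids of size $q^2+1$ needed to partition the $(q^2+1)(q^2+q+1)$ points of $\mathsf{Q}^+(5,q)$.

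I would then invoke Denniston's theorem \cite{Denniston}, which constructs packings of $\PG(3,q)$ for every prime power $q$. This yields the required partition of the points of $\mathsf{Q}^+(5,q)$ into ovoids, and hence a non-trivial section-regular partition witnessing non-synchronisation. The only non-routine ingredient is Denniston's construction itself, which is cited rather than reproved; once that is in hand, the reduction via Klein is a standard translation and the argument is complete.
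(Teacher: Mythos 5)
Your proposal is correct and follows essentially the same route as the paper: the text preceding this theorem notes that a partition of the points of $\mathsf{Q}^+(5,q)$ into ovoids corresponds via the Klein correspondence to a packing of $\PG(3,q)$, and invokes Denniston's existence result for packings for all $q$. Your additional observations (the numerical consistency check and the remark that the spread-plus-ovoid route is unavailable since $\mathsf{Q}^+(5,q)$ has no spread) are accurate but not needed beyond what the paper already records.
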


The triality quadric $\mathsf{Q}^+(7,q)$ is interesting here because if it has an ovoid, then it also has a spread.
So 
\begin{theorem}
Separating and synchronising are equivalent for $\mathrm{P\Omega}^+(8,q)$ acting on singular points.
\end{theorem}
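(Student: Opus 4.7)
The plan is to prove the contrapositive of the nontrivial direction. Since separating always implies synchronising (via the hierarchy recalled earlier from Araujo--Cameron--Steinberg), the task reduces to showing that if $\POmega^+(8,q)$ is non-separating on singular points, then it is also non-synchronising. Using the characterizations stated earlier in this section (from \cite[Theorem 6.8]{AraujoCameronSteinberg2017}), this in turn reduces to the purely geometric assertion that an ovoid of $\mathsf{Q}^+(7,q)$ implies the existence of a spread of $\mathsf{Q}^+(7,q)$, which then yields clause (i) of the non-synchronising characterization.

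To prove this geometric implication, I would invoke triality. The key fact is that $\mathsf{Q}^+(7,q)$ admits the triality automorphism $\tau$, an outer automorphism of order $3$ of $\POmega^+(8,q)$ which cyclically permutes the three $\POmega^+(8,q)$-orbits of objects on the quadric, namely the points and the two families of maximal totally singular subspaces (the Latin and Greek $3$-spaces, or ``solids''). Triality is incidence-preserving in a suitable sense, mapping the incidence of points with solids of one family to the incidence of solids of another family with solids of the third family, and ultimately back around. Consequently, an ovoid, which is a set of points meeting every solid exactly once, is sent under $\tau$ to a set of solids of one family meeting every solid of the next family exactly once, which, by unwinding the incidence geometry, is precisely a spread of $\mathsf{Q}^+(7,q)$ (in fact a spread contained in a single family).

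Thus, if $O$ is an ovoid, then $O^{\tau}$ is a spread of $\mathsf{Q}^+(7,q)$. Combined with $O$ itself, the polar space admits both an ovoid and a spread, so condition (i) of the non-synchronising characterization holds, giving that $\POmega^+(8,q)$ is non-synchronising on singular points. This establishes the required equivalence.

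The main (and essentially only) substantive step is the triality argument; none of the usual obstructions arise, because we are merely transporting a combinatorial structure along an automorphism of the ambient group that is known to permute these orbits transitively. The one point to be careful about is a bookkeeping check that the image of an ovoid really produces a spread (partition of the point set) rather than some weaker covering configuration; this follows from the fact that $\tau$ is an incidence-preserving bijection between the three object sets, so the ``exactly one'' property of ovoids is transported directly to the ``exactly one'' property of spreads.
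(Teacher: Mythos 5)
Your proof is correct and follows exactly the route the paper takes: the only nontrivial direction is that non-separating implies non-synchronising, which reduces to the known fact that an ovoid of the triality quadric $\mathsf{Q}^+(7,q)$ yields a spread via the triality automorphism, so that condition (i) of the non-synchronising characterisation is met. The paper states this triality fact in one sentence just before the theorem; you have simply spelled out the same argument in more detail.
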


We strongly believe that $\POmega^+(2n,q)$, acting  on singular points, is separating for all $n\ge 5$ and for all prime powers $q$.
Failing a proof of this, can we at least show the following?

\begin{open}
Can we show that $\mathsf{Q}^+(2n,q)$ does not have a partition into ovoids for $n\ge 5$?
\end{open}

\subsubsection{Hermitian spaces, odd dimension}

By a result of Segre \cite{Segre65}, $\mathsf{H}(3,q^2)$ does not have a spread for all prime-powers $q$. Moreover, Thas \cite{Thas81} showed that $\mathsf{H}(2n-1,q^2)$ does not have a spread for all prime-powers $q$ and for all $n\ge 2$. For all prime-powers $q$, we know that $\mathsf{H}(3,q^2)$ has a \emph{fan} (see \cite{BrouwerWilbrink}); a partition
into ovoids. Therefore,

\begin{theorem}
$\PGammaU(4,q)$acting  on singular points is non-synchronising for all prime-powers $q$ and $n\ge 2$.
\end{theorem}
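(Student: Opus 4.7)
The plan is to invoke the characterisation recalled at the start of Subsection 3.3 (taken from \cite[Theorem 6.8]{AraujoCameronSteinberg2017}): a classical group acting on the singular points of its polar space is non-synchronising precisely when the polar space admits either (i) both an ovoid and a spread, or (ii) a partition of its points into ovoids. So the task reduces to exhibiting one of these geometric configurations inside $\mathsf{H}(3,q^2)$.

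For every prime-power $q$, route (i) is unavailable because Segre's theorem \cite{Segre65}, cited in the paragraph immediately preceding the statement, rules out spreads of $\mathsf{H}(3,q^2)$. Therefore I would establish route (ii). The existence of a \emph{fan}, i.e.\ a partition of the points of $\mathsf{H}(3,q^2)$ into ovoids, was precisely the content of the result of Brouwer and Wilbrink \cite{BrouwerWilbrink} mentioned in the sentence just before the theorem. Plugging a fan into the characterisation gives a section-regular partition of the point set whose transversal is any ovoid, witnessing non-synchronisation for the full collineation group $\PGammaU(4,q)$.

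A small bookkeeping point worth recording: the criterion from \cite{AraujoCameronSteinberg2017} is usually stated for a specified group, but here any group containing the simple unitary group and acting on the points of $\mathsf{H}(3,q^2)$ inherits the same section-regular partition, because the transversal/part structure does not depend on the acting group. Since $\PGammaU(4,q)$ is transitive on points of $\mathsf{H}(3,q^2)$, and the fan of ovoids is preserved as a partition (even if individual ovoids are permuted by field automorphisms), the witness works verbatim for $\PGammaU(4,q)$.

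There is no real obstacle: the proof is a direct application of the Araújo--Cameron--Steinberg criterion to the Brouwer--Wilbrink fan. The harder mathematics all sits inside those two cited results, and the statement is essentially a translation of the existence of a fan in $\mathsf{H}(3,q^2)$ into the synchronisation language. (The ``$n\geqslant 2$'' appearing in the statement seems to be a vestigial quantifier, as the theorem concerns only the fixed dimension $d=4$.)
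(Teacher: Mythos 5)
Your proposal is correct and follows exactly the paper's own route: the theorem is justified in the text immediately preceding it by combining the Ara\'ujo--Cameron--Steinberg criterion with the existence of a fan (partition into ovoids) of $\mathsf{H}(3,q^2)$ from Brouwer--Wilbrink. Your observation that the ``$n\geqslant 2$'' quantifier is vestigial is also accurate, as the statement concerns only the fixed case $\mathsf{H}(3,q^2)$.
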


We strongly believe that $\mathrm{PSU}(2n,q)$, acting on singular points, is separating for all $n\ge 3$ and for all prime powers $q$.
Failing a proof of this, can we at least show the following?

\begin{open}
Can we show that $\mathsf{H}(2n-1,q^2)$ does not have a partition into ovoids for $n\ge 3$?
\end{open}

\subsection{Spreading}\label{sect:spreading}

Let $G$ be a classical group acting naturally on the set $\Omega$ of points of its associated polar space $\mathcal{P}$.
 We have the following `$k$-ovoid analogue' of the `$1$-ovoid' characterisation for non-separating groups of Cameron and Kazanidis \cite{CAMERON_KAZANIDIS_2008} (cf. \cite[Theorem 6.8]{AraujoCameronSteinberg2017}).

\begin{lemma}\label{movoidnonspreading}
If there exists a nontrivial $k$-ovoid of $\mathcal{P}$, then the full semisimilarity group $G$ of $\mathcal{P}$ is non-spreading on totally singular points.
\end{lemma}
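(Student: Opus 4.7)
The plan is to exhibit a direct non-spreading witness $(X,Y)$, taking $Y$ to be the hypothesised nontrivial $k$-ovoid and $X$ to be the point set of a fixed maximal totally singular subspace $S$ of $\mathcal{P}$, regarded as a multiset with entries in $\{0,1\}$.

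For the nontriviality conditions, $Y$ is nontrivial by hypothesis. The existence of a nontrivial $k$-ovoid forces the Witt index of $\mathcal{P}$ to be at least $2$ (otherwise each maximal totally singular subspace would be a single point, leaving only the trivial $k$-ovoids $\varnothing$ and $\Omega$), so $S$ contains more than one point and is a proper subset of $\Omega$, making $X$ a nontrivial multiset.

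For the constant-intersection condition, since $G$ is the full semisimilarity group of $\mathcal{P}$ it permutes the maximal totally singular subspaces, so $Y^g$ remains a $k$-ovoid for every $g\in G$. The defining property of a $k$-ovoid then gives $|X\cap Y^g|=k$, and therefore $|X\star Y^g|=k$ is constant in $g$.

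The remaining divisibility condition $|X|\mid|\Omega|$ is a routine case-check from the standard cardinality formulas for finite polar spaces: writing $r$ for the (algebraic) Witt index so that $|X|=(q^{r}-1)/(q-1)$, one verifies family-by-family that for each of the polar spaces in the table above the ratio $|\Omega|/|X|$ has the form $q^s+1$ for an integer $s$ depending on the type and rank, and so is always integral. I do not foresee any serious obstacle here; the conceptual content of the lemma is simply the translation of the $k$-ovoid property into a constant multiset-intersection via the action of the semisimilarity group on maximal totally singular subspaces.
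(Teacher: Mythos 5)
Your proposal is correct and follows essentially the same route as the paper: the paper's proof also takes $X$ to be the point set of a maximal totally singular subspace and $Y$ to be the nontrivial $k$-ovoid, noting that $|X|$ divides $|\Omega|$ and that $|X\star Y^g|=k$ for all $g\in G$. The only difference is that you spell out the nontriviality of $X$ and the divisibility check, which the paper leaves implicit.
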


\begin{proof}
Let $X$ be the set of points incident with a maximal totally singular subspace of $\mathcal{P}$. Then $X$ is a set, and hence also a multiset, and moreover $|X|$ divides the number of points of $\mathcal{P}$.
Let $Y$ be a nontrivial $k$-ovoid of $\mathcal{P}$, then
$|X\star Y^g|=k$ for all $g\in G$.
\end{proof}

\begin{theorem}\label{ClassicalGroupsOnTIPoints}
The polar spaces for the groups $\PGammaU(2r,q)$, $\PGammaO^+(2r,q)$, $\PGammaO(2r+1,q)$, $\PGammaSp(2r,q)$ and $\PGammaO^-(2r-2,q)$ each have a nontrivial $k$-ovoid, and so are non-spreading in their 
actions on totally singular points, for $r \geqslant 2$.
\end{theorem}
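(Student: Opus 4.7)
The plan is to apply Lemma \ref{movoidnonspreading}, reducing the theorem to exhibiting a nontrivial $k$-ovoid in each of the five polar spaces. My main tool will be a non-degenerate hyperplane section: if $\mathcal{P}$ is a rank-$r$ polar space whose maximal totally singular subspaces have projective dimension $r-1$, and $H$ is a hyperplane of the ambient projective space such that $\mathcal{P}\cap H$ is a non-degenerate polar space of rank $r-1$, then no maximal of $\mathcal{P}$ can lie inside $H$, since the largest totally singular subspace of $\mathcal{P}\cap H$ has projective dimension only $r-2$. Hence every maximal of $\mathcal{P}$ meets $H$ in a projective codimension-one subspace, producing a constant intersection number and so a nontrivial $k$-ovoid.

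This strategy settles three of the five cases directly. For $\mathsf{H}(2r-1,q^2)$ I would take a non-degenerate Hermitian hyperplane $\mathsf{H}(2r-2,q^2)$, of rank $r-1$; for $\mathsf{Q}^+(2r-1,q)$ a parabolic hyperplane $\mathsf{Q}(2r-2,q)$, again of rank $r-1$; and for $\mathsf{Q}(2r,q)$ an \emph{elliptic} hyperplane $\mathsf{Q}^-(2r-1,q)$. The elliptic type is forced in the parabolic case because a hyperbolic hyperplane $\mathsf{Q}^+(2r-1,q)$ has the same rank $r$ as $\mathsf{Q}(2r,q)$, and the rank-drop hypothesis would fail. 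A short count gives $k=(q^{r-1}-1)/(q-1)$ or, in the Hermitian case, $k=(q^{2r-2}-1)/(q^2-1)$, both clearly nontrivial.

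The symplectic space $\mathsf{W}(2r-1,q)$ and the elliptic space $\mathsf{Q}^-(2r-3,q)$ are the hard cases and together form the main obstacle: every hyperplane of $\mathsf{W}(2r-1,q)$ is degenerate (being the perp of a point) and every non-degenerate hyperplane of $\mathsf{Q}^-(2r-3,q)$ is parabolic of the same rank, so the ambient hyperplane argument is unavailable. For $\mathsf{W}(2r-1,q)$ with $q$ even I will use the classical isomorphism $\mathsf{W}(2r-1,q)\cong\mathsf{Q}(2r,q)$ to transfer a $k$-ovoid from the parabolic case. For the remaining sub-cases I plan to fall back on alternative constructions: a subfield subgeometry when $q$ is a proper prime power; a known hemisystem (for example those of Cossidente--Penttila for $\mathsf{Q}^-(5,q)$ and their higher-dimensional generalisations) when one is available; or a direct application of Theorem \ref{thm:normalsubgroup} with a carefully chosen normal pair $B\trianglelefteq A$ of subgroups of the collineation group whose orbit-fusion on points produces a $k$-ovoid. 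The hardest part will be handling all $q$ uniformly in these two cases, particularly when $q$ is an odd prime and no subfield construction is available.
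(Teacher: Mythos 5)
Your hyperplane-section construction is sound for three of the five families: a non-degenerate hyperplane $H$ meeting $\mathcal{P}$ in a polar space of rank one less contains no generator of $\mathcal{P}$, so every generator meets $\mathcal{P}\cap H$ in exactly a hyperplane of itself, giving a nontrivial $k$-ovoid with $k=(q^{r-1}-1)/(q-1)$ (or its Hermitian analogue). This is a more self-contained route than the paper takes for $\PGammaU(2r,q)$, $\PGammaO^+(2r,q)$ and $\PGammaO(2r+1,q)$, which it simply disposes of by citing \cite[Theorem 6.10]{AraujoCameronSteinberg2017}. You also correctly identify why the argument cannot work for $\mathsf{W}(2r-1,q)$ and for the elliptic quadric, and the reduction of $\mathsf{W}(2r-1,q)$, $q$ even, to the parabolic case is fine.

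However, for the two cases you yourself flag as the main obstacle — symplectic with $q$ odd, and elliptic — the proposal does not contain a proof, and the fallback options you list do not close the gap. A subfield subgeometry of $\mathcal{P}$ requires $q$ to be a proper prime power, so it cannot handle $q$ prime, as you concede; the Cossidente--Penttila hemisystems live in $\mathsf{H}(3,q^2)$ (as point sets meeting every generator in $(q+1)/2$ points), not in $\mathsf{Q}^-(5,q)$, and in any case known hemisystems do not cover the generic dimensions needed here; and the appeal to Theorem \ref{thm:normalsubgroup} is left entirely unspecified. The missing idea is \emph{field reduction} in the sense of Kelly \cite{kelly2007}: regard a low-rank polar space over an extension field $\F_{q^e}$ as a point set in the underlying $\F_q$-space. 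Concretely, the point set of $\mathsf{H}(2m,q^2)$ field-reduces to a nontrivial $k$-ovoid of $\mathsf{W}(2(2m+1)-1,q)$ and of $\mathsf{Q}^-(2(2m+1)-1,q)$, handling dimensions $\equiv 2 \pmod 4$; for dimensions $\equiv 0 \pmod 4$ one field-reduces a $2$-ovoid of $\mathsf{W}(3,q^e)$ (which exists for all $q$ by \cite[Theorem 5.1]{BambergLawPenttila}) to get a $2\frac{q^e-1}{q-1}$-ovoid of $\mathsf{W}(4e-1,q)$, and field-reduces $\mathsf{Q}^-(3,q^e)$ (an ovoid of itself) to get a $\frac{q^e-1}{q-1}$-ovoid of $\mathsf{Q}^-(4e-1,q)$. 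These constructions are uniform in $q$, including odd primes, which is precisely where your plan stalls. Without this (or some equally uniform construction) the theorem is not proved for $\PGammaSp(2r,q)$ with $q$ odd or for the elliptic family.
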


\begin{proof}
By \cite[Theorem 6.10]{AraujoCameronSteinberg2017}, the groups $\PGammaU(2r,q)$,
$\PGammaO^+(2r,q)$, and $\PGammaO(2r+1,q)$ are non-spreading in their 
actions on totally singular points.

By \cite[Corollary 3.3]{kelly2007}, the points underlying the image under (a particular) field-reduction of the
points of $\mathsf{H}(2r,q^{2})$, is a $k$-ovoid of $\mathsf{W}(2(2r+1)-1,q)$, for some $k$.
With a different choice of field-reduction, we also obtain a $k$-ovoid of $\mathsf{Q}^-(2(2r+1)-1,q)$.
According to the comment succeeding \cite[Corollary 3.4]{kelly2007}, in both cases we will have
a nontrivial $k$-ovoid. Therefore, by Lemma \ref{movoidnonspreading}, the groups $\PGammaSp(2(2r+1),q)$
and $\PGammaO^-(2(2r+1),q)$ are non-spreading in their actions on totally singular points.

Let us consider the remaining cases: $\PGammaSp(4r,q)$ and $\PGammaO^-(4r,q)$. In the former case,
we can assume $q$ is odd, since for $q$ even, $\PGammaSp(4r,q)$ acting on its symplectic space is equivalent to $\PGammaO(4r+1,q)$ acting on its quadric.
By \cite[Corollary 3.4(2)]{kelly2007}, if $\mathsf{W}(3,q^e)$ has a $k$-ovoid,
then $\mathsf{W}(4e-1,q)$ has an $k\frac{q^e-1}{q-1}$-ovoid. Now $\mathsf{W}(3,q)$ always has a $2$-ovoid, by \cite[Theorem 5.1]{BambergLawPenttila}.
Therefore, $\mathsf{W}(4e-1,q)$ has a $2\frac{q^e-1}{q-1}$-ovoid, which is nontrivial, and hence $\PGammaSp(4r,q)$
is non-spreading in its action on totally singular points.
Finally, $\mathsf{Q}^-(3,q^e)$ is an ovoid of itself, and so by \cite[Corollary 3.4(4)]{kelly2007},
$\mathsf{Q}^-(4e-1,q)$ has a $\frac{q^e-1}{q-1}$-ovoid, and it is nontrivial. Therefore, $\PGammaO^-(4r,q)$ is non-spreading in its action 
on totally singular points.
\end{proof}

\section{Unitary groups acting on totally isotropic $k$-spaces}\label{sect:unitary_kspaces}

We saw in Theorem \ref{ClassicalGroupsOnTIPoints} that most of the classical groups in the natural actions on totally singular 1-spaces,
are non-spreading, because the associated finite polar space has a (nontrivial) $k$-ovoid, for some $k$. To date,
we do not know of any nontrivial $k$-ovoids of the polar spaces $\mathsf{H}(2r,q^2)$, and so we must employ a different argument to
show that unitary groups (in odd dimension) are non-spreading.
The groups $\PSU(3,q)$ acting on totally isotropic 1-spaces, are spreading, because they are $2$-transitive. So the first case to consider is dimension 5. Our construction will depend on the following conjecture:

\begin{conjecture}\label{conjecture:tricky}
Let $q$ be an odd prime power, and let $b\in\F_{q^2}$ such that
$b^{q+1}=-1$. Let $(s,u,w)\in\F_{q^2}^3$ such that $s^{q+1}=u^{q+1}+w^{q+1}$.
For each $\kappa\in\F_q^*$, let $n(\kappa)$ be the number of values of $\lambda\in\F_{q^2}\cup\{\infty\}$ such that 
\[(w \lambda+1)(b+u \lambda)^q-(w \lambda+1)^q (b+u \lambda)=\kappa \left(b^2+1+\lambda^2 \left(s^2-u^2-w^2\right)\right)^{(q+1)/2}.\]
Then $n(\kappa)=n(-\kappa)$ for all $\kappa\in\F_q^*$.
\end{conjecture}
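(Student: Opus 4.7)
The plan is to prove Conjecture~\ref{conjecture:tricky} by exhibiting an involution $\phi$ on $\F_{q^2}\cup\{\infty\}$ that bijectively matches solutions for $\kappa$ with solutions for $-\kappa$. The starting point is the algebraic structure of the two sides. Writing $X=w\lambda+1$ and $Y=b+u\lambda$, the left-hand side is the ``Wronskian'' $XY^q-X^qY$, which visibly satisfies $f(\lambda)^q=-f(\lambda)$; hence $f(\lambda)$ lies in the one-dimensional $\F_q$-subspace $S:=\{z\in\F_{q^2}: z^q=-z\}$. Fixing a generator $\alpha\in S$ with $\alpha^2\in\F_q^*$, write $f(\lambda)=\alpha F(\lambda)$ with $F(\lambda)\in\F_q$. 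On the right, $g(\lambda)^{q+1}$ is a norm and hence in $\F_q$, and $g(\lambda)^{(q+1)/2}$ is a square root of it, lying in $\F_q$ when $g(\lambda)^{q+1}$ is a square in $\F_q$ and in $S$ otherwise.

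First I would partition the $\lambda$'s. Values with $g(\lambda)=0$ force $f(\lambda)=0$ and contribute equally to $n(\kappa)$ and $n(-\kappa)$; values with $g(\lambda)\ne 0$ and $g(\lambda)^{q+1}$ a nonzero square put RHS in $\F_q^*$ and LHS in $S$, so give no solutions. All $\kappa$-dependent contributions therefore come from the set $\Lambda$ of $\lambda$ for which $g(\lambda)^{q+1}$ is a non-square in $\F_q^*$; for such $\lambda$, writing $g(\lambda)^{(q+1)/2}=\alpha G(\lambda)$ with $G(\lambda)\in\F_q^*$, the original equation reduces to the $\F_q$-equation $F(\lambda)=\kappa G(\lambda)$. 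The conjecture is thus equivalent to the symmetry
\[\#\{\lambda\in\Lambda : F(\lambda)/G(\lambda)=\kappa\}=\#\{\lambda\in\Lambda : F(\lambda)/G(\lambda)=-\kappa\}.\]

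Squaring the original equation gives $f(\lambda)^2=\kappa^2 g(\lambda)^{q+1}$, which is manifestly $\kappa\leftrightarrow-\kappa$ invariant; this already shows that $n(\kappa)+n(-\kappa)$ depends only on $\kappa^2$, but is strictly weaker than the conjecture. To promote this to individual equality I would look for $\phi$ in the geometric setting that gives rise to the equation: the conjecture is set up for an application of Theorem~\ref{thm:normalsubgroup} to $G=\PSU(5,q)$ acting on the points of $\mathsf{H}(4,q^2)$, so a subgroup pair $B\trianglelefteq A$ with $|A:B|=2$ should exist for which the relevant $A$-orbits each split into two $B$-orbits indexed by $\pm\kappa$. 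A representative of the nontrivial coset $A\setminus B$ should then induce $\phi$ concretely as a projective or semilinear transformation of the $\lambda$-line, preserving $\Lambda$ and the value of $G(\lambda)$ while negating $F(\lambda)$. I would make this concrete by returning to the parametrisation of $\mathsf{H}(4,q^2)$ behind the equation, identifying the subgroup pair, and computing its action on the parameter $\lambda$.

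The main obstacle is to pin down the involution $\phi$ explicitly. A purely algebraic attack via multiplicative characters reduces the statement to showing that $\sum_{\lambda\in\Lambda}\chi(F(\lambda)/G(\lambda))=0$ for every odd multiplicative character $\chi$ of $\F_q^*$; however, $F/G$ is rational in the coordinates of the $\lambda$-line, and Weil-type bounds on the corresponding curve only deliver $O(\sqrt{q})$ cancellation, insufficient for the exact equality required. I therefore expect the proof to rely on the geometric origin of the conjecture, with the construction and verification of $\phi$ arising from $A\setminus B$ being the principal challenge.
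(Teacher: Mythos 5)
There is a genuine gap here, and an important contextual point: the paper does not prove this statement. It is stated as Conjecture~\ref{conjecture:tricky} precisely because the authors could not establish it; Theorem~\ref{unitary_1spaces} is explicitly conditional on it. So there is no proof in the paper to match yours against, and your proposal, as written, does not close the conjecture either.

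Your structural analysis is sound and matches the geometry the conjecture comes from: the left-hand side $f(\lambda)=XY^q-X^qY$ lies in $S=\{z:z^q=-z\}$, the right-hand side is nonzero in $\F_q^*$ when $g(\lambda)$ is a nonzero square of $\F_{q^2}$ and in $S\setminus\{0\}$ when $g(\lambda)$ is a nonsquare, so the $\kappa$-dependent solutions reduce to an $\F_q$-equation $F(\lambda)=\kappa G(\lambda)$ on the locus where $g$ is a nonsquare. This is exactly the reduction implicit in the paper's proof of Theorem~\ref{unitary_1spaces}, where $F/G$ is the invariant $\delta$ and the two sides of the pairing are the $B$-orbits with $\delta=\pm\kappa$. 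The gap is that everything then hinges on the involution $\phi$ of the $\lambda$-line, which you never construct, and the place you propose to find it is essentially circular. The only involution supplied by the ambient group pair is $\tau=\mathrm{diag}(1,-1,1,-1,1)$ (or a $B$-translate of it), and $\tau$ does \emph{not} stabilise the totally isotropic line $m$ underlying the $\lambda$-parametrisation: it sends $m$ to a generally different line $m^\tau$, which is exactly why the paper must separately assert that $|m\cap\omega^B|=|m^\tau\cap\omega^B|$ rather than read it off from a symmetry of $m$ itself. An element of $A$ inducing your $\phi$ would have to lie in the stabiliser of $m$ while swapping the $\delta=\kappa$ and $\delta=-\kappa$ fibres, and there is no reason such an element exists; if it did, the authors would not have needed to leave the statement as a conjecture. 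Your observation that squaring only yields the vacuous identity $n(\kappa)+n(-\kappa)=n(-\kappa)+n(\kappa)$, and that character-sum/Weil methods give only $O(\sqrt{q})$ cancellation rather than exact equality, correctly identifies why the easy routes fail — but that leaves the central claim unproved. To make progress you would need either a genuinely new construction of $\phi$ (not inherited from $A$), or a different counting argument; at minimum, computational verification for small $q$ would be worth reporting, but it would not constitute a proof.
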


\begin{theorem}\label{unitary_1spaces}
Let $q$ be an odd prime power and suppose that Conjecture \ref{conjecture:tricky} holds. Then
$\PGammaU(5,q)$ in its action on the set of totally isotropic 1-spaces,
is non-spreading. 
\end{theorem}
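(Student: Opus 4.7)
The plan is to apply Theorem \ref{thm:normalsubgroup} with $G=\PGammaU(5,q)$ and $\Omega$ the set of totally isotropic $1$-spaces of $\mathsf{H}(4,q^{2})$. The natural candidate for the witness set is $X$, the set of points incident with a fixed maximal totally singular subspace (a line) of $\mathsf{H}(4,q^{2})$, so that $|X|=q^{2}+1$ divides $|\Omega|=(q^{5}+1)(q^{2}+1)$ and $X$ is nontrivial. With this choice, $X^{G}$ is the set of all lines of $\mathsf{H}(4,q^{2})$ and $\Delta$ is the set of lines meeting a chosen $A$-orbit $\omega_{1}^{A}$ of points.

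Next I would fix coordinates on $\F_{q^{2}}^{5}$ and an explicit Hermitian form, and produce subgroups $B \triangleleft A$ of $G$ with $[A:B]=2$. The appearance of an element $b\in\F_{q^{2}}$ with $b^{q+1}=-1$ in Conjecture \ref{conjecture:tricky} points to the natural choice: $B$ is taken to be a subgroup of $\PSU(5,q)$ stabilising a suitable substructure, and $A$ is obtained from $B$ by adjoining an outer involution (realised, for instance, through scalar multiplication by $b$ or composition with a Frobenius-type map) which normalises $B$ but does not lie inside it. The coordinates $(s,u,w)\in\F_{q^{2}}^{3}$ with $s^{q+1}=u^{q+1}+w^{q+1}$ appearing in the conjecture then correspond to Hermitian coordinates of a generic point of $\omega_{1}^{A}$ relative to the chosen decomposition of the ambient space.

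The base point $\omega_{1}$ would be selected so that its $A$-orbit splits into exactly two $B$-orbits, giving $k=2$ and $\omega_{1}^{A}=\omega_{1}^{B}\cup\omega_{2}^{B}$. The decisive verification is that $A$ and $B$ have the same orbits on $\Delta$. I would parametrise the lines in $\Delta$ through $\omega_{1}$ by a parameter $\lambda\in\F_{q^{2}}\cup\{\infty\}$, and track which $B$-orbit the second intersection point with $\omega_{1}^{A}$ belongs to. The algebraic condition selecting $B$-orbits is encoded by a norm-type expression whose sign is indexed by $\kappa\in\F_{q}^{*}$, and the equation that identifies the $B$-orbit of a line with parameter $\lambda$ should simplify to
\[
(w\lambda+1)(b+u\lambda)^{q}-(w\lambda+1)^{q}(b+u\lambda)=\kappa\bigl(b^{2}+1+\lambda^{2}(s^{2}-u^{2}-w^{2})\bigr)^{(q+1)/2},
\]
so that $n(\kappa)$ counts the lines in a fixed $B$-orbit on $\Delta$ through $\omega_{1}$. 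The hypothesis $n(\kappa)=n(-\kappa)$ of Conjecture \ref{conjecture:tricky} is then exactly the statement that the outer involution generating $A/B$ pairs up the $B$-orbits on $\Delta$ with equal cardinalities, whence $A$ and $B$ have the same orbits on $\Delta$. Theorem \ref{thm:normalsubgroup} then delivers the witness $(X,\Omega+2\omega_{1}^{B}-\omega_{1}^{A})$ to non-spreading.

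The main obstacle is the coordinate bookkeeping: choosing $A$, $B$, and $\omega_{1}$, together with a normal form for lines of $\mathsf{H}(4,q^{2})$ through $\omega_{1}$, so that the parameters $b$, $(s,u,w)$ and $\kappa$ of the conjecture emerge intrinsically and so that the line-intersection condition reduces \emph{exactly} to the displayed equation rather than to a variant. Once this dictionary between the geometry of $\mathsf{H}(4,q^{2})$ and the equation of Conjecture \ref{conjecture:tricky} is in place, the non-spreading conclusion is a direct invocation of Theorem \ref{thm:normalsubgroup}.
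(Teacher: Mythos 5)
Your high-level plan matches the paper's strategy: a witness of the form $(X,\Omega+2\omega_1^B-\omega_1^A)$ with $X$ a totally isotropic line, $B\triangleleft A$ of index $2$, and Conjecture \ref{conjecture:tricky} supplying the pairing of $B$-orbits. But the proposal has two genuine problems beyond deferred ``bookkeeping''. First, you claim the conjecture shows that $A$ and $B$ have the same orbits on $\Delta$ (the set of lines), with $n(\kappa)$ counting \emph{lines} in a fixed $B$-orbit through $\omega_1$. This misreads the role of $n(\kappa)$: in the actual argument, $n(\kappa)$ counts the \emph{points} $v_\lambda$ on a single fixed line $m$ whose $B$-invariant $\delta$ equals $\kappa$, so $n(\kappa)=n(-\kappa)$ yields only the intersection identity $|m\cap\omega^B|=|m^\tau\cap\omega^B|$. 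That identity does not imply $m$ and $m^\tau$ lie in the same $B$-orbit of lines, so the hypothesis of Theorem \ref{thm:normalsubgroup} as stated is \emph{not} verified; the paper explicitly says it is \emph{adapting} the argument of that theorem, skipping the ``same orbits on $\Delta$'' hypothesis and going straight to the conclusion $|X^g\cap\omega_1^B|=|X^g\cap\omega_2^B|$ that the proof of Theorem \ref{thm:normalsubgroup} actually uses. Your writeup would need to make this substitution explicit, since as stated the inference ``whence $A$ and $B$ have the same orbits on $\Delta$'' is unjustified.

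Second, the concrete constructions you gesture at are not the right ones, and they carry essentially all of the content of the proof. The element $b$ with $b^{q+1}=-1$ is not used to build the outer involution; it is a coordinate of the normalised base point $(1,b,0,b,1)$ of $\omega^B$ on the line $m$. The group $B$ is the determinant-one part of the stabiliser in $\PGU(5,q)$ of a subfield quadric $\mathcal{Q}$ \emph{and} a totally isotropic line $\ell$, acting on a set $U_\boxtimes$ of points (those off $\ell^\perp$ satisfying nondegeneracy conditions, with $2(a+bd)+c^2$ a nonsquare); its $q-1$ orbits there are separated by the invariant $\delta(a,b,c,d,1)=(d^q-d)/(2(a+bd)+c^2)^{(q+1)/2}\in\F_q^*$; and the involution is the diagonal collineation $\mathrm{diag}(1,-1,1,-1,1)$, which negates $\delta$ and hence pairs the $B$-orbits (so $k=2$, as you guessed). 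Deriving the displayed equation of the conjecture then requires parametrising the points of $m$ in $U_\boxtimes$ by $\lambda$ via a second point $(-w,-u,t,u,w)$ in the hyperplane $X_2+X_4=0$ and computing $\delta(v_\lambda)=\kappa$, with $s=\sqrt{2}\,t$. None of this is in your proposal, so while the skeleton is right, the proof is not yet there.
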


\begin{proof} 
Consider the Hermitian form $U(x,y):=xJ\bar{y}^\top$ where $J$ is the matrix
\[
\begin{bmatrix}
    0&0&0&0&1\\
    0&0&0&1&0\\
    0&0&1&0&0\\
    0&1&0&0&0\\
    1&0&0&0&0
\end{bmatrix}
\]
and we will refer to $\mathsf{H}(4,q^2)$ as the Hermitian polar space defined by this form.
Consider the totally isotropic points of $\mathsf{H}(4,q^2)$ with coordinates lying in $\F_q$: they
form a quadric $\mathcal{Q}$. Next, consider the line $\ell$ spanned by the first two canonical basis vectors $e_1$
and $e_2$.  Then $\ell$ is a totally isotropic line intersecting $\mathcal{Q}$ in $q+1$ points. 
Let $U$ be the set of points $P$ of $\mathsf{H}(4,q^2)$ that do not lie in $\ell^\perp$ and such that: (i)
$P^\perp\cap \ell$ does not lie in $\mathcal{Q}$; (ii) the elements of $\mathcal{Q}$ lying in $P^\perp$ span a plane (i.e., not a line). 
Consider the stabiliser of $\mathcal{Q}$ and $\ell$ in $\PGU(5,q)$. Then its elements are of the form: 
\NiceMatrixOptions{ columns-width =18pt}
\[
\begin{bNiceMatrix}
\Block{2-2}<\normalsize>{M}&&0&0&0\\
&&0&0&0\\
*&*&1&0&0\\
*&*&*& \Block{2-2}<\normalsize>{J_2M^{-\top}J_2}\\
*&*&*&&
\end{bNiceMatrix}, 
\]
with all entries in $\F_q$, and with $M\in \GL(2,q)$, $J_2=\begin{bmatrix}0&1\\1&0\end{bmatrix}$. 
The asterisks denote elements of $\F_q$ satisfying certain relations, but we will not need to know
what they are precisely.

We will now completely describe the points of $U$ in coordinate-form. Let $X:=(a,b,c,d,e)$ and suppose $X\in U$. Since $X$ does not lie in $\ell^\perp$, we have $(d,e)\ne (0,0)$. Moreover, $X^\perp\cap \ell=(\bar{d},-\bar{e},0,0,0)$, and since $X^\perp\cap \ell\notin\mathcal{Q}$,
we have $d,e\ne 0$ and $d/e\notin \F_q$. We also know that $X$ is totally isotropic,
and so $a\bar{e}+\bar{a}e+b\bar{d}+\bar{b}d+c^{q+1}=0$.
Next, we want to find all the points $P$ of $\mathcal{Q}$ lying in $X^\perp$.
Let $P=(p_1,p_2,p_3,p_4,p_5)$, and note that the $p_i$ lie in $\F_q$.
Since $P$ is totally isotropic and lies in $X^\perp$, we have
$2p_1p_5+2p_2p_4+p_3^2=0$ and $p_1e+p_2d+p_3c+p_4b+p_5a=0$.
Since $e$ is nonzero, we can assume without loss of generality that $e=1$. Then
\begin{align}
    p_1&=-(p_2d+p_3c+p_4b+p_5a)\\
    a+\bar{a}&=-\bar{b}d-b\bar{d}-c^{q+1}
\end{align}
    and hence, substituting into the previous equations gives a quadratic in $(p_2,p_3,p_4,p_5)$:
    \begin{align}
2 p_2 p_4+p_3^2-2dp_2p_5-2cp_3p_5-2bp_4p_5-2ap_5^2=0
    \end{align}
In order for this quadratic form to be nondegenerate, taking the determinant of its Gram matrix
shows that $2(a+bd)+c^2\ne 0$. Therefore, the elements of $U$ are 
precisely
\[
\{(a,b,c,d,1):  a+\bar{a}+b\bar{d}+\bar{b}d+c^{q+1}=0, d\notin \F_q, 2(a+bd)+c^2\ne 0\}.
\]

Let $\sigma$ be the map that takes the $q$-th power
of each coordinate of the given point. Then $XX^\sigma$ is a line, and it intersects $\mathcal{Q}$ in
0 or 2 points: elliptic or hyperbolic. Let $X\in U$ and write $X=(a,b,c,d,1)$. Then $X^\sigma=(a^q,b^q,c^q,d^q,1)$ and the point $(a+\lambda a^q,b+\lambda b^q,c+\lambda c^q,d+\lambda d^q, 1+\lambda)$,
where $\lambda \in \F_q^*$, lies in $\mathcal{Q}$ if $\lambda^{q+1}=1$ and
the point is totally isotropic. This means, after a tedious calculation, that
$( 2(a+bd)+c^{2})^q\lambda^2+2(a+bd)+c^2=0$
which can be solved for $\lambda^2$ because $2(a+bd)+c^{2}\ne 0$ (from the condition on $X$):
\[
\lambda^2=-( 2(a+bd)+c^{2})^{1-q}.
\]
So there are 2 or 0 solutions to the quadratic depending on whether $\lambda^{q+1}=1$ is satisfied, or not.
Thus, we apply the $(q+1)/2$-th power to each side and we find the following condition:
\[
( 2(a+bd)+c^{2})^{(1-q^2)/2}=(-1)^{(q+1)/2}
\]
or in other words, $2(a+bd)+c^{2}$ is square or not.
Indeed, $U$ splits into two parts under the stabiliser of $\mathcal{Q}$ and $\ell$, depending on whether the value of $ 2(a+bd)+c^{2}$ is square or not.
Take the part $U_\boxtimes$ for which $2(a+bd)+c^{2}$ is nonsquare. 
Let $B$ be the subgroup of the stabiliser of $\mathcal{Q}$ and $\ell$ in $\PGU(5,q)$, with $\det(M)=1$.
The set $U_\boxtimes$ splits further under the action of $B$. 
For each point $(a,b,c,d,1)$ of $U_\boxtimes$, let 
\[
\delta(a,b,c,d,1):=\frac{d^q-d}{(2(a+bd)+c^{2})^{(q+1)/2}}.
\]
It is not difficult to see by direct calculation that $\delta$ is $B$-invariant.
Now a nonzero element $x\in\F_{q^2}$ is a nonsquare if and only if $x^{(q+1)/2}+(x^q)^{(q+1)/2}=0$,
that is, $(x^q)^{(q+1)/2}=-x^{(q+1)/2}$.
So 
    \begin{align*}
\delta(a,b,c,d,1)^q&=\frac{d-d^q}{\left( ( 2(a+bd)+c^{2})^q\right)^{(q+1)/2}}\\
&=-\frac{d^q-d}{-\left(  2(a+bd)+c^{2}\right)^{(q+1)/2}}\\
&=\delta(a,b,c,d,1).
    \end{align*}
Therefore, $\delta$ has its values in $\F_q^*$. 
Moreover, two points of $U_\boxtimes$ lie in a common $B$-orbit if they have the same value under $\delta$. Therefore,
the $q-1$ orbits of $B$ on $U_\boxtimes$ are parameterised by $\delta$.

Consider the involutory collineation $\tau$ with matrix $\mathrm{diag}(1,-1,1,-1,1)$.
Then $\tau$ normalises $B$, so let $A=\langle B,\tau\rangle$. Note that $A$ pairs up the $B$-orbits on $U_\boxtimes$ because
$(a,b,c,d,1)^\tau=(a,-b,c,-d,1)$ and 
\begin{align*}
\delta(a,-b,c,-d,1)&=
\frac{(-d)^q-(-d)}{(2(a+(-b)(-d))+c^{2})^{(q+1)/2}}\\
&=-\frac{d^q-d}{(2(a+bd)+c^{2})^{(q+1)/2}}\\
&=-\delta(a,b,c,d,1).
\end{align*}

Let $m$ be a line that intersects $U_\boxtimes$, and let $\omega\in U_\boxtimes$. 
Now
\[
|m\cap (\omega^\tau)^B|=|m\cap \left(\omega^B\right)^\tau|=|m^\tau\cap \omega^B|.
\]
We want to show that $|m\cap \omega^B|=|m^\tau \cap \omega^B|$.
So we are adapting the argument used in the proof
of Theorem \ref{thm:normalsubgroup}. If $\Omega$ is the set of totally isotropic points, and $X$ is a totally isotropic line
intersecting $U_\boxtimes$, then it will follow that 
$(X,\Omega + 2\omega^B - \omega^A)$ is a witness to $G$ being non-spreading, for any $\omega\in U_\boxtimes$.

By using symmetry, we can choose $\omega$ and a point of 
$\omega^B$ that $m$ intersects in. Indeed, we may suppose without loss of generality
that $m$ intersects $\omega^B$ in a point of the form 
$(1,b,0,b,1)$, and the conditions that it be totally isotropic and lie in $U_\boxtimes$ translate to $b^{q+1}=-1$, $b\notin \F_q$, and $2(b^2+1)$ is a nonsquare.
Consider the hyperplane $\Pi:X_2 + X_4=0$. 
If $(1,b,0,b,1)$ were to lie in $\Pi$, then $2b=0$, a contradiction.
So $m$ does not lie in $\Pi$ and
intersects it in a totally isotropic point of the form 
$(-w,-u,t,u,w)$ such that $t^{q+1}=2\left(u^{q+1}+w^{q+1}\right)$.
Then the elements of $U_\boxtimes$ incident with $m$ are of the form
\[
v_\lambda:=\left(1-\lambda w,b-\lambda u, \lambda t,b+\lambda u,1+\lambda w\right),\, \lambda\in\F_{q^2}\cup\{\infty\},
\]
such that $1+\lambda w\ne 0$, $(b+u\lambda)/(1+\lambda w) \notin \F_q$,
and $2 (b^2+1)+\lambda^2 \left(t^2-2 u^2-2 w^2\right)$ is a nonsquare.
Let $n(\kappa)$ be the number of values of $\lambda$ such that $\delta(v_\lambda)=\kappa$.
By Conjecture \ref{conjecture:tricky}, and setting $s:=\sqrt{2}t$, we have $n(\kappa)=n(-\kappa)$ for all $\kappa\in\F_q^*$.
It follows that $|m\cap \omega^B|=|m^\tau \cap \omega^B|$ and hence 
$(X,\Omega + 2\omega^B - \omega^A)$ is a witness to $G$ being non-spreading.
\end{proof}

Now we look at a generalisation: unitary groups acting on $k$-dimensional totally isotropic subspaces,
where $k>1$.
Let $n\ge 2$, let $d:=2n+1$, and let $q$ be a prime power. Consider the group $G:=\PGammaU(2n+1,q)$ acting on the set of maximal totally isotropic subspaces (of dimension $n$). We will use the following finite field model for the Hermitian polar space associated to $G$. Let $\tr_{q^{2d}\rightarrow q^2}$ denote the relative trace map from $\F_{q^{2d}}$ to $\F_{q^2}$. Define the following sesquilinear form on $\F_{q^{2d}}$ thought of as a vector space over $\F_{q^2}$:
\[
f(x,y):=\tr_{q^{2d}\rightarrow q^2}\left(xy^{q^d}\right).
\]
One can easily check that $f$ is Hermitian by applying the field automorphism $\sigma:x\mapsto x^{q^d}$. Before we proceed further, let
us make a few remarks about $\sigma$. 

The set of fixed elements of $\F_{q^{2d}}$ under $\sigma$ is precisely the subfield $\F_{q^d}$. If we restrict $f$ to this subfield, we see that $f(x,y)=\tr_{q^{2d}\rightarrow q^2}(xy)=\tr_{q^{d}\rightarrow q}(xy)$, since $d$ is odd. This yields a symmetric bilinear form on $\F_{q^d}$. The totally isotropic
subspaces for this form produce a \emph{subfield geometry} for the Hermitian polar space. In fact, for $q$ odd, this is a geometric embedding of the parabolic quadric $\mathsf{Q}(2n,q)$ into the Hermitian polar space $\mathsf{H}(2n,q^2)$.
For $q$ even, we have that for all $x\in\F_{q^d}$,
\[
f(x,x)=\tr_{q^{d}\rightarrow q}(x^2)=\tr_{q^{d}\rightarrow q}(x)
\]
and so the totally isotropic points for $f$, in the subfield geometry, lie in the hyperplane $\tr_{q^{d}\rightarrow q}(x)=0$. Therefore, the subfield geometry here is a symplectic space embedded in a hyperplane of the ambient space.

\begin{theorem}\label{unitary_kspaces}
$\PGammaU(2n+1,q)$ acting on the set of $k$-dimensional ($k>1$) totally isotropic subspaces is non-spreading.
\end{theorem}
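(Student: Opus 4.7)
The plan is to invoke Theorem \ref{thm:normalsubgroup}, following the strategy of Theorem \ref{unitary_1spaces} but exploiting the fact that $k>1$ provides additional dimensional room. Let $G=\PGammaU(2n+1,q)$ act on $\Omega$, the set of totally isotropic $k$-subspaces of $\mathsf{H}(2n,q^2)$. The subfield geometry $\mathcal{F}$ established in the preamble (the parabolic quadric $\mathsf{Q}(2n,q)$ for $q$ odd, and the symplectic geometry $\mathsf{W}(2n-1,q)$ inside a hyperplane for $q$ even) sits inside $\mathsf{H}(2n,q^2)$ as the fixed point set of the involution $\sigma:x\mapsto x^{q^d}$, so every totally isotropic $k$-subspace $W_0$ of $\mathcal{F}$ produces a $\sigma$-invariant totally isotropic $k$-subspace $\widetilde{W_0}:=\F_{q^2}W_0$ of $\mathsf{H}(2n,q^2)$; since $\mathcal{F}$ has rank $n$ and $k\le n$, such $W_0$ exist.

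For the subgroups appearing in Theorem \ref{thm:normalsubgroup}, I would take $A$ to be the stabiliser in $G$ of the subfield geometry $\mathcal{F}$ (generated by the corresponding subfield subgroup together with $\sigma$ and any field automorphisms preserving $\mathcal{F}$), and $B$ to be a normal subgroup of $A$ whose quotient $A/B$ reflects a discriminant-type invariant distinguishing $B$-orbits inside $A$-orbits on $\Omega$. A natural choice for the quotient, in analogy with the involution $\tau$ from the proof of Theorem \ref{unitary_1spaces}, is a diagonal involution arising from scaling by a non-square of $\F_q^*$. This would produce a partition $\widetilde{W_0}^A = \bigsqcup_{i=1}^{m} \omega_i^B$ of the $A$-orbit of $\widetilde{W_0}$ into distinct $B$-orbits. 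For the candidate witness, I would take $X$ to be an incidence-based set of totally isotropic $k$-subspaces, for instance the set of those contained in a fixed totally isotropic $(k+1)$-subspace, or those containing a fixed totally isotropic point lying outside $\mathcal{F}$, chosen so that $|X|$ divides $|\Omega|$.

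The main technical obstacle is then to verify that $A$ and $B$ have the same orbits on $\Delta = \{Y \in X^G : Y \cap \widetilde{W_0}^A \neq \varnothing\}$. For the $k=1$ case treated in Theorem \ref{unitary_1spaces}, this reduced to a one-parameter count that required the delicate Conjecture \ref{conjecture:tricky}. The advantage when $k>1$ is the richer residual structure: any totally isotropic $k$-subspace through a point $p$ determines a totally isotropic $(k-1)$-subspace inside the residual Hermitian polar space $p^\perp/p \cong \mathsf{H}(2n-2,q^2)$, and the incidence of elements of $X^G$ with $\widetilde{W_0}^A$ can be decomposed via this residual. The residual subfield geometry inherits enough symmetry --- in particular, the subfield stabiliser restricted to the residual acts transitively on residual configurations of the relevant type --- that the orbit equality on $\Delta$ should follow directly, without a counting conjecture. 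Theorem \ref{thm:normalsubgroup} would then produce the witness $(X,\, \Omega + m\widetilde{W_0}^B - \widetilde{W_0}^A)$ to $G$ being non-spreading.
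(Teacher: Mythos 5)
Your overall strategy (apply Theorem \ref{thm:normalsubgroup} with a subgroup pair $B\triangleleft A$ attached to the subfield geometry) is in the right spirit, but the two hypotheses of that theorem that actually carry the proof are left unverified, and the paper's own argument avoids both difficulties by a quite different choice of $A$ and $B$. First, you need an $A$-orbit on $\Omega$ that genuinely splits into $m\ge 2$ distinct $B$-orbits. You assert this for $\widetilde{W_0}^A$, but your $B$ is only described via a quotient generated by ``a diagonal involution arising from scaling by a non-square of $\F_q^*$'': a scalar transformation acts trivially on projective subspaces, so as stated it cannot separate any $B$-orbits; and your base subspace $\widetilde{W_0}$ is chosen to be $\sigma$-invariant and to come from the subfield geometry, i.e.\ it sits in the most symmetric position possible, which is exactly where one would least expect the orbit to split. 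Second, and more seriously, the hypothesis that $A$ and $B$ have the same orbits on $\Delta$ is precisely the step that forced Conjecture \ref{conjecture:tricky} in the $k=1$, $n=2$ case of Theorem \ref{unitary_1spaces}, and your claim that for $k>1$ it ``should follow directly'' from the residual structure is not an argument: the residue of a totally isotropic $k$-space through a point is a $(k-1)$-space of $\mathsf{H}(2n-2,q^2)$, so for $k=2$ your reduction lands back in the point case, where the difficulty lives.

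The paper sidesteps all of this by not using the full subfield stabiliser. In the field model $\F_{q^{2d}}$, $d=2n+1$, it takes $B=\langle\omega\rangle$ with $\omega=z^{(q^d-1)(q+1)}$, a cyclic isometry group of odd order $\frac{q^d+1}{q+1}$, and $A=\langle\omega,\sigma\rangle$. A short field computation shows $x^\sigma\omega^i=x\alpha$ with $\alpha\in\langle\omega\rangle$, so $A$ and $B$ have \emph{identical} orbits on totally isotropic points; choosing $X$ to be the set of maximals (later, $k$-spaces) through a point then makes the orbit condition on $\Delta$ automatic, with no counting required. The splitting condition is obtained by a parity argument: since $|B|$ is odd, if $\sigma$ fixed every $B$-orbit of maximals setwise it would fix a maximal in each orbit, forcing every maximal to lie in the subfield geometry, a contradiction; hence some $A$-orbit of maximals is a union of two $B$-orbits. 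The witness is then transferred to $k$-spaces by replacing the distinguished maximal $\omega_1$ with the multiset of $k$-spaces contained in it, yielding $(X_k,\Omega_k+2K_B-K_A)$. To rescue your approach you would need to specify $B$ concretely, exhibit the split orbit, and actually prove the $\Delta$-condition; as written, the proposal does none of these.
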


\begin{proof}
Let $d=2n+1$, let $z$ be a primitive element of $\F_{q^{2d}}$, and let $\omega:=z^{(q^d-1)(q+1)}$. 
Then $\omega$ has order $\frac{q^d+1}{q+1}$ and
is an isometry for $f$ since
\[
f(x\omega,y\omega)=\tr_{q^{2d}\rightarrow q^2}\left(xy^{q^d}\omega^{q^d+1}\right)=f(x,y)
\]
(as $\omega^{q^d+1}=1$). Let $B$ be the group generated by $\omega$, in its action by right multiplication on $\F_{q^{2d}}$,
and let $A:=\langle \omega, \sigma\rangle$. Then $B$ has index 2 in $A$ since $\sigma$ normalises $B$. 
Let $x$ be a nonzero element of $\F_{q^{2d}}$. Then for all $i$, we have
\[
x^\sigma \omega^i=x^{q^d}\omega^i=x\left(x^{q^d-1}\omega^i\right).
\]
Let $\alpha:=x^{q^d-1}\omega^i$. Now it is not difficult to see that $\langle \omega\rangle$ consists of all the elements $\beta$ of $\F_{q^{2d}}$ such that $\beta^{\frac{q^d+1}{q+1}}=1$. On the other hand,
\[
\alpha^{\frac{q^d+1}{q+1}}=\left(x^{q^d-1}\omega^i\right)^{\frac{q^d+1}{q+1}}=x^{(q^d-1)\frac{q^d+1}{q+1}}=1
\]
and so $\alpha\in \langle\omega\rangle$. So we have shown that $x^\sigma \omega^i=x\alpha$ for some $\alpha\in B$, and hence $x^A\subseteq x^B$.
So $x^A=x^B$, and hence the $A$-orbits and $B$-orbits coincide on totally isotropic points.

Next, let us look at the action on maximal totally isotropic subspaces. Let $\tau\in A$ and suppose $\tau$ fuses two $B$-orbits on maximals.
Then $\tau$ is an involution and so $\tau\in B\sigma$ (because all involutions of $A$ lie in $B\sigma$). Suppose, by way of contradiction,
that there is no $\tau\in A$ fusing two $B$-orbits on maximals. That is, every element of $B\sigma$ acts trivially on $B$-orbits. 
Since $B$ acts trivially on $B$-orbits, this is equivalent to $\sigma$ acting trivially on $B$-orbits (on maximals). 
Now suppose $\sigma$ fixes $m^B$ setwise (where $m$ is a maximal). Since the orbits of $\langle \sigma\rangle$ on $m^B$ have size $1$ or $2$, there must be a fixed element $m_0$ of $m^B$,
because $|m^B|=|B|=\frac{q^d+1}{q+1}$ and $\frac{q^d+1}{q+1}$ is odd.
This means that $m_0$ lies in the fixed substructure of $\sigma$, which is a parabolic quadric for $q$ odd, and a symplectic space for $q$ even. Geometrically, this means that every element of $m^B$ is fixed by $\sigma$. This is a contradiction, since the maximals fixed by $\sigma$
are geometrically distinguished by the subfield geometry; or in other words, we knew a priori that there are maximals not fixed by $\sigma$.

Therefore, there is a $B$-orbit $M$ of maximal totally singular subspaces that is not an $A$-orbit on maximals. The result for maximals follows from Lemma \ref{thm:normalsubgroup}, where $\Delta$ consists of the pencils of maximals on points (a pencil is the set of maximals which contain the specified point).

We can extend this construction easily to totally isotropic $k$-spaces. The above application
of Lemma \ref{thm:normalsubgroup} produces a witness $(X,\Omega + 2\omega_1^B - \omega_1^A)$ 
where $\Omega$ is the set of maximals, $\omega_1\in M$, and $X$ is the set of maximals on a point $P$, say.
Now let $\Omega_k$ be the set of singular $k$-spaces, let $X_k$ be the set of $k$-spaces
incident with $P$, and let $K$ be the subset of $k$-spaces which are contained in $\omega_1$. Now let $K_B = \sum K^B$ and $K_A = \sum K^A$ be multisets. A witness is then given by $(X_k, \Omega_k+2K_B-K_A)$ 
for the action of $G$ on totally singular $k$-spaces to be non-spreading.
\end{proof}

\section{Classical groups on $r/2$-spaces}\label{sec:halfWitt}

In general, with a few caveats, the action of classical groups on the set of all totally singular spaces of a fixed dimension are primitive (cf. \cite{King}). These actions are not well studied with respect to the synchronisation hierarchy, and we have mainly been considering the most natural of these actions; on totally singular points. However, for the remainder of this section we will also look at a particular case of \emph{half-Witt-index} subspaces.
According to \cite[p.56]{AraujoCameronSteinberg2017},
Spiga showed by computer that $\PSp(4,3)$, $\PSp(4,5)$, and $\PSp(4,7)$ acting on points are non-spreading. This result, and the more general case for $\PSp(2r,q)$ acting on totally singular points, follow immediately
from our knowledge of $k$-ovoids of symplectic spaces (see Theorem \ref{ClassicalGroupsOnTIPoints}).  Here we
provide another approach that demonstrates the direct use of the definition
of a non-spreading group, and in doing so we are able to answer the spreading question for additional permutation representations, namely the action on totally singular $r/2$-spaces.

In what follows, let $V$ be a $2r$-dimensional formed vector space over $\mathbb{F}_q$ with Witt index $r$, for $r>1$ and even. Given a subspace $U$ of $V$, let $[U]$ denote the set of totally singular $\frac{r}{2}$-dimensional subspaces of $V$ contained in $U$. (We remind the reader we are using algebraic dimensions here.)

\begin{lemma}\label{lem:intersection}
Let $S$ be a non-degenerate $r$-dimensional subspace of $V$, with the same type as $V$ (so it has Witt index $\frac{r}{2})$. For all maximal totally isotropic subspaces, $M$, either
\[
\bigg|[M] \cap [S]\bigg| = \bigg|[M] \cap [S^\perp]\bigg| = 1,
\]
or 
\[
\bigg|[M] \cap [S]\bigg| = \bigg|[M] \cap [S^\perp]\bigg| = 0.
\]
\end{lemma}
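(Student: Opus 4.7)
The plan is to reduce both of the counts $|[M]\cap[S]|$ and $|[M]\cap[S^\perp]|$ to the single dimension $d:=\dim(M\cap S)$, and then to show via a perp calculation that $\dim(M\cap S^\perp)$ equals the same $d$. The first step is the observation that, since $M$ is totally singular, every subspace of $M$ is totally singular; hence $[M]\cap[S]$ is simply the set of $\frac{r}{2}$-dimensional subspaces of $M\cap S$ (and similarly $[M]\cap[S^\perp]$ is the set of such subspaces of $M\cap S^\perp$). Consequently $|[M]\cap[S]|$ equals $1$ when $d=\frac{r}{2}$, equals $0$ when $d<\frac{r}{2}$, and exceeds $1$ when $d>\frac{r}{2}$, with the analogous trichotomy controlled by $d':=\dim(M\cap S^\perp)$.

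Next I would rule out the ``$>1$'' case. Because $M\cap S$ is a totally singular subspace of the non-degenerate space $S$ whose Witt index is $\frac{r}{2}$ (using the hypothesis that $S$ has the same type as $V$), we have $d\leq \frac{r}{2}$; the same argument applied to $S^\perp$ gives $d'\leq \frac{r}{2}$. Thus each of the two counts already lies in $\{0,1\}$, and the only remaining task is to show that $d$ and $d'$ are both equal to $\frac{r}{2}$ or both strictly smaller.

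The heart of the argument is the equality $d=d'$, which I would obtain in one line. Since $M$ is maximal totally isotropic in the non-degenerate space $V$ of dimension $2r$, we have $M^\perp=M$; using in addition the standard identity $(A\cap B)^\perp=A^\perp+B^\perp$ for non-degenerate forms, we get
\[
\dim(M\cap S^\perp)=2r-\dim(M^\perp+S)=2r-\dim(M+S)=\dim(M\cap S).
\]
Combining the three ingredients: if $d=\frac{r}{2}$ then $M\cap S$ and $M\cap S^\perp$ are themselves the unique elements of $[M]\cap[S]$ and $[M]\cap[S^\perp]$, giving both counts equal to $1$; if $d<\frac{r}{2}$ neither intersection contains any $\frac{r}{2}$-dimensional subspace, giving both counts equal to $0$. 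The only point in the plan that genuinely requires care is remembering to invoke $M^\perp=M$ in the perp calculation, which is why the maximality of $M$ is essential; all the rest is dimension bookkeeping.
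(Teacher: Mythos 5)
Your proof is correct. It rests on the same two elementary ingredients as the paper's --- the Witt-index bound $\dim(M\cap S)\le \frac{r}{2}$ (and likewise for $S^\perp$), and a perpendicularity-plus-dimension count --- but it arranges them differently. The paper argues one implication and leaves the converse to symmetry: assuming some $P\in[M]\cap[S]$ exists, it notes that $M$ and $S^\perp$ both lie in $P^\perp$, which has dimension $\frac{3r}{2}$, forcing $\dim(M\cap S^\perp)\ge\frac{r}{2}$, and then caps this at $\frac{r}{2}$ using $P\cap S^\perp=0$. You instead prove the unconditional identity $\dim(M\cap S)=\dim(M\cap S^\perp)$ directly from $M^\perp=M$ together with $(M\cap S^\perp)^\perp=M^\perp+S$ and the dimension formula, after which the dichotomy is immediate. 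Your version is slightly more uniform --- it treats both cases of the lemma at once and makes explicit where the maximality of $M$ enters (namely through $M=M^\perp$) --- at the cost of invoking the identity $(A\cap B)^\perp=A^\perp+B^\perp$, which requires non-degeneracy of the ambient reflexive form (true here in all the relevant cases, including the polar form of the quadratic form in even characteristic). Both arguments are complete and of comparable length.
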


\begin{proof}
Note the intersection of $M$ with $S$ and $S^\perp$ is totally isotropic, and since the Witt index of $S$ and $S^\perp$ is $\frac{r}{2}$, it follows that at most one $\frac{r}{2}$-dimensional totally isotropic subspace of $S$ or $S^\perp$ can be contained in $M$. 
Assume that $P$ is a totally isotropic $\frac{r}{2}$-dimensional subspace of $S$ which is also contained in $M$. Note that $V = S \oplus S^\perp$, and so $P$ and $S^\perp$ intersect trivially. Both $M$ and $S^\perp$ are contained in $P^\perp$, and hence they must intersect in a subspace of dimension at least $\frac{r}{2}$, since they each have dimension $r$ and $\dim(P^\perp)= \frac{3r}{2}$. Since $M$ contains $P$, an $\frac{r}{2}$-dimensional subspace, and $S^\perp$ does not, it follows that $\dim(M \cap S^\perp)=\frac{r}{2}$. Moreover, this subspace is totally isotropic, since $M$ is totally isotropic. 
\end{proof}

The following result shows that, for $r$ even, the permutation groups arising from the actions of $\PGammaSp(2r, q)$, $\PGammaU(2r, q)$, and $\PGammaO^+(2r, q)$ on totally singular $\frac{r}{2}$-spaces are non-spreading.

\begin{theorem}
Let $G$ be the full semisimilarity group of a formed space $V$ of even Witt index $r$, acting on the set of all $\frac{r}{2}$-dimensional totally isotropic subspaces. Then $G$ is non-spreading.
\end{theorem}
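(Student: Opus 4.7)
The plan is to apply Theorem~\ref{thm:normalsubgroup} using the non-degenerate orthogonal splitting $V = S \oplus S^\perp$, where $S$ is a non-degenerate $r$-dimensional subspace of the same type as $V$ (so both $S$ and $S^\perp$ have Witt index $\tfrac{r}{2}$). By Witt's theorem, $G$ contains an element swapping $S$ and $S^\perp$, so setting $A := G_{\{S,S^\perp\}}$ and $B := G_S = G_{S^\perp}$ gives $B \triangleleft A$ of index $2$. I would take $\omega_1 \in [S]$, $\omega_2 \in [S^\perp]$ (so $k=2$), and $X := [M]$ for a fixed maximal totally isotropic subspace $M$, with $\Omega$ denoting the set of all $\tfrac{r}{2}$-dimensional totally isotropic subspaces of $V$.

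To verify the hypotheses of Theorem~\ref{thm:normalsubgroup}, I would first note that $B$ induces the full semisimilarity group on each of $S$ and $S^\perp$, and each of these is transitive on the maximal totally isotropic subspaces of its factor; hence $\omega_1^B = [S]$, $\omega_2^B = [S^\perp]$, these are disjoint (since $S \cap S^\perp = 0$), and $\omega_1^A = [S] \cup [S^\perp]$. The orbit $X^G$ is naturally indexed by the maximal totally isotropic subspaces $M'$ of $V$, and $\Delta$ consists of those $[M']$ meeting $[S] \cup [S^\perp]$. Here Lemma~\ref{lem:intersection} is the key geometric ingredient: it tells me that each such $M'$ satisfies $M' = (M' \cap S) \oplus (M' \cap S^\perp)$ with both summands $\tfrac{r}{2}$-dimensional and totally isotropic, yielding a $B$-equivariant bijection $\Delta \to [S] \times [S^\perp]$.

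The one hypothesis that requires genuine checking --- and what I expect to be the main technical step --- is that $A$ and $B$ have the same orbits on $\Delta$. Via the bijection above this reduces to transitivity of $B$ on $[S] \times [S^\perp]$, and the cleanest way I would argue this is via the direct product of linear isometry groups of $S$ and $S^\perp$ sitting inside $B$: isometries of $S$ extended by the identity on $S^\perp$ act transitively on $[S]$ and trivially on $[S^\perp]$, and the symmetric statement holds with the roles of $S$ and $S^\perp$ exchanged, so together they act transitively on the product. Field automorphisms act diagonally on the splitting and so do not by themselves give an honest direct product, but the pure isometry subgroup alone already suffices.

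Once this is established, Theorem~\ref{thm:normalsubgroup} with $k=2$ delivers the witness $(X,\ \Omega + 2\omega_1^B - \omega_1^A) = (X,\ \Omega + [S] - [S^\perp])$, where the second coordinate is the multiset taking the value $2$ on $[S]$, the value $0$ on $[S^\perp]$, and the value $1$ on the complement. Nontriviality is immediate for $r \geq 2$ (each of $[S]$, $[S^\perp]$, and their complement is nonempty), and the total size of this multiset equals $|\Omega|$, so $G$ is non-spreading in this action.
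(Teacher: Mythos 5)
Your proof is correct, but it takes a genuinely different route from the paper's. The paper proves this theorem \emph{directly} from the definition of non-spreading (the surrounding text explicitly advertises this section as demonstrating ``the direct use of the definition''): it takes the set $[M]$ and the multiset $\Omega+[S]-[S^\perp]$ and observes that, by Lemma~\ref{lem:intersection}, the $+[S]$ and $-[S^\perp]$ contributions to $\left|[M^g]\star\left(\Omega+[S]-[S^\perp]\right)\right|$ cancel, so the intersection number is the constant $|[M]|$; no group theory is invoked beyond transitivity on maximals. You instead route the same data through Theorem~\ref{thm:normalsubgroup}, with $B=G_S$, $A=G_{\{S,S^\perp\}}$, $\omega_1^B=[S]$, $\omega_2^B=[S^\perp]$ and $k=2$, which obliges you to verify additional group-theoretic hypotheses: the existence of a swap of $S$ and $S^\perp$, the transitivity of $B$ on $[S]\times[S^\perp]$, and the $B$-equivariant bijection $\Delta\to[S]\times[S^\perp]$ coming from $M'=(M'\cap S)\oplus(M'\cap S^\perp)$. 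These verifications are all sound --- $S^\perp$ does have the same type and Witt index $\frac{r}{2}$ as $S$, so the swap exists, and $\mathrm{Isom}(S)\times\mathrm{Isom}(S^\perp)\leq B$ is transitive on $[S]\times[S^\perp]$ by Witt's theorem --- and you land on exactly the same witness, since $\Omega+2[S]-\left([S]\cup[S^\perp]\right)=\Omega+[S]-[S^\perp]$. What your approach buys is uniformity with the paper's general methodology (the AB-lemma as a systematic witness-generating machine); what the paper's approach buys is brevity, since once Lemma~\ref{lem:intersection} is in hand the witness can be checked in two lines with no stabiliser computations at all.
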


\begin{proof}
Let $\Omega$ be the set of totally singular $\frac{r}{2}$-dimensional subspaces.
Let $M$ be any maximally totally isotropic subspace of $V$, and let $S$ be a non-degenerate subspace of $V$ of dimension $r$. Then clearly it follows that
\[
\left|[M] \cap [S] - [M] \cap [S^\perp]\right| = 0
\]
 from Lemma \ref{lem:intersection}, and hence
\[
\left|[M] \star \left( \Omega + [S] - [S^\perp]\right)\big| = \big| [M] \right|.
\]
Moreover, 
\[
\left|\Omega \star \bigg( \Omega + [S] - [S^\perp]\bigg) \right| = |\Omega|,
\]
which divides $|\Omega|$.
Since this is true for all $M$, it is true for $M^g$ for all $g \in G$, and so the set $Y = [M]$ and the multiset $X = \Omega + [S] - [S^\perp]$ form witnesses that $G$ is non-spreading.
\end{proof}

\section{Concluding remarks on actions on other subspaces}

The authors of \cite{Dh1} define a \emph{generalized ovoid} as a set of totally isotropic subspaces of a finite classical polar space such that each maximal totally isotropic subspace contains precisely one member of that set.
A generalized ovoid is \emph{homogeneous} if all its elements have the same dimension, $d$ say. If
a (nonempty proper) homogeneous generalized ovoid exists for a polar space $\mathcal{P}$,
then the collineation group of $\mathcal{P}$ is non-separating and hence also non-spreading in its action on totally isotropic $d$-spaces,
and the proof is very similar to the proof of Lemma \ref{movoidnonspreading}.
The conclusion to \cite{Dh1} mentions the possibility of studying more general sets of totally isotropic subspaces, where each maximal totally isotropic subspace contains a constant number of elements of the set.
These are called \emph{regular $m$-ovoids} in \cite{Dh2}, and they
also provide witnesses to non-spreading for the action of a collineation
group of a finite polar space on its totally isotropic $k$-spaces.
More generally, certain \emph{designs} in finite polar spaces would also be witnesses 
to non-spreading. This has been explored
by Weiss in \cite{Weiss}. Indeed, the main result of \cite{Weiss} shows that
for large enough parameters, the action of the collineation group of
a finite polar space on totally isotropic $k$-spaces is non-spreading.

\subsection*{Acknowledgements} This work forms part of an Australian Research Council Discovery Project DP200101951.

\bibliographystyle{plain}

\begin{thebibliography}{10}

\bibitem{AraujoCameronSteinberg2017}
Jo\~{a}o Ara\'{u}jo, Peter~J. Cameron, and Benjamin Steinberg.
\newblock Between primitive and 2-transitive: synchronization and its friends.
\newblock {\em EMS Surv. Math. Sci.}, 4(2):101--184, 2017.

\bibitem{Ball04}
Simeon Ball.
\newblock On ovoids of {{\(O(5,q)\)}}.
\newblock {\em Adv. Geom.}, 4(1):1--7, 2004.

\bibitem{BallGovaertsStorme}
Simeon Ball, Patrick Govaerts, and Leo Storme.
\newblock On ovoids of parabolic quadrics.
\newblock {\em Des. Codes Cryptography}, 38(1):131--145, 2006.

\bibitem{withSaul}
John Bamberg, Saul Freedman, and Michael Giudici.
\newblock Spreading primitive groups of diagonal type do not exist.
\newblock {\em Proc. Roy. Soc. Edinburgh Sect. A}, to appear.

\bibitem{BambergLawPenttila}
John Bamberg, Maska Law, and Tim Penttila.
\newblock Tight sets and {$m$}-ovoids of generalised quadrangles.
\newblock {\em Combinatorica}, 29(1):1--17, 2009.

\bibitem{Bayens}
Luke Bayens.
\newblock {\em Hyperovals, {L}aguerre planes and hemissystems -- an approach
  via symmetry}.
\newblock PhD thesis, Colorado State University, 2013.

\bibitem{Blokhuis1995}
Aart Blokhuis and G.~Eric Moorhouse.
\newblock Some {$p$}-ranks related to orthogonal spaces.
\newblock {\em J. Algebraic Combin.}, 4(4):295--316, 1995.

\bibitem{BrouwerWilbrink}
A.~E. Brouwer and H.~A. Wilbrink.
\newblock Ovoids and fans in the generalized quadrangle {$Q(4,2)$}.
\newblock {\em Geom. Dedicata}, 36(1):121--124, 1990.

\bibitem{CAMERON_KAZANIDIS_2008}
Peter~J. Cameron and Priscila~A. Kazanidis.
\newblock Cores of symmetric graphs.
\newblock {\em Journal of the Australian Mathematical Society},
  85(2):145–154, 2008.

\bibitem{CP2016}
Antonio Cossidente and Francesco Pavese.
\newblock Hemisystems of {$\mathcal{Q}(6,q)$}, {$q$} odd.
\newblock {\em J. Combin. Theory Ser. A}, 140:112--122, 2016.

\bibitem{CossidentePenttila}
Antonio Cossidente and Tim Penttila.
\newblock Hemisystems on the {Hermitian} surface.
\newblock {\em J. Lond. Math. Soc., II. Ser.}, 72(3):731--741, 2005.

\bibitem{DeBeule_et_al}
J.~De~Beule, A.~Klein, and K.~Metsch.
\newblock Substructures of finite classical polar spaces.
\newblock In J.~De~Beule and L.~Storme, editors, {\em Current research topics
  in Galois geometry}, chapter~2, pages 35--61. Nova Sci. Publ., New York,
  2012.

\bibitem{DeBeule_et_al2}
J.~De~Beule, A.~Klein, K.~Metsch, and L.~Storme.
\newblock Partial ovoids and partial spreads of classical finite polar spaces.
\newblock {\em Serdica Math. J.}, 34(4):689--714, 2008.

\bibitem{DeBeuleMetsch}
Jan De~Beule and Klaus Metsch.
\newblock The hermitian variety $h(5,4)$ has no ovoid.
\newblock {\em Bulletin of the Belgian Mathematical Society - Simon Stevin},
  12(5):727--733, 01 2006.

\bibitem{Dh2}
Maarten De~Boeck, Jozefien D'haeseleer, and Morgan Rodgers.
\newblock Regular ovoids and {C}ameron-{L}iebler sets of generators in polar
  spaces.
\newblock \url{https://arxiv.org/pdf/2310.14739.pdf}.

\bibitem{Dembowski}
Peter Dembowski.
\newblock {\em Finite geometries}.
\newblock Classics in Mathematics. Springer-Verlag, Berlin, 1997.
\newblock Reprint of the 1968 original.

\bibitem{Denniston}
Ralph H.~F. Denniston.
\newblock Some packings of projective spaces.
\newblock {\em Atti Accad. Naz. Lincei Rend. Cl. Sci. Fis. Mat. Natur. (8)},
  52:36--40, 1972.

\bibitem{Dh1}
Jozefien D'haeseleer, Ferdinand Ihringer, and Kai-Uwe Schmidt.
\newblock A common generalization of hypercube partitions and ovoids in polar
  spaces.
\newblock \url{https://arxiv.org/pdf/2401.10523.pdf}.

\bibitem{HirschfeldThas}
J.~W.~P. Hirschfeld and J.~A. Thas.
\newblock {\em General {G}alois geometries}.
\newblock Springer Monographs in Mathematics. Springer, London, 2016.

\bibitem{kelly2007}
Shane Kelly.
\newblock Constructions of intriguing sets of polar spaces from field reduction
  and derivation.
\newblock {\em Des. Codes Cryptogr.}, 43(1):1--8, 2007.

\bibitem{King}
Oliver~H. King.
\newblock The subgroup structure of finite classical groups in terms of
  geometric configurations.
\newblock In {\em Surveys in combinatorics 2005. Papers from the 20th British
  combinatorial conference, University of Durham, Durham, UK, July 10--15,
  2005.}, pages 29--56. Cambridge: Cambridge University Press, 2005.

\bibitem{Klein}
Andreas Klein.
\newblock Partial ovoids in classical finite polar spaces.
\newblock {\em Des. Codes Cryptography}, 31(3):221--226, 2004.

\bibitem{LansdownPhD}
Jesse Lansdown.
\newblock {\em Designs in {F}inite {G}eometry}.
\newblock PhD thesis, The University of Western Australia and RWTH Aachen,
  2020.

\bibitem{LansdownNiemeyer}
Jesse Lansdown and Alice~C. Niemeyer.
\newblock A family of hemisystems on the parabolic quadrics.
\newblock {\em J. Combin. Theory Ser. A}, 175:105280, 17, 2020.

\bibitem{Moorhouse1996}
G.~Eric Moorhouse.
\newblock Some {$p$}-ranks related to {H}ermitian varieties.
\newblock {\em J. Statist. Plann. Inference}, 56(2):229--241, 1996.
\newblock Special issue on orthogonal arrays and affine designs, Part II.

\bibitem{Segre65}
Beniamino Segre.
\newblock Forme e geometrie hermitiane, con particolare riguardo al caso
  finito.
\newblock {\em Ann. Mat. Pura Appl. (4)}, 70:1--201, 1965.

\bibitem{Thas81}
J.~A. Thas.
\newblock Ovoids and spreads of finite classical polar spaces.
\newblock {\em Geom. Dedicata}, 10(1-4):135--143, 1981.

\bibitem{Weiss}
Charlene Weiss.
\newblock Nontrivial $t$-designs in polar spaces exist for all $t$.
\newblock \url{https://arxiv.org/pdf/2311.08288v1.pdf}.

\bibitem{Williams}
Blair Williams.
\newblock {\em Ovoids of {P}arabolic and {H}yperbolic {S}paces}.
\newblock PhD thesis, The University of Western Australia, 1999.

\end{thebibliography}

\end{document}